\numberwithin{equation}{section}
\theoremstyle{plain}
\newtheorem{theorem}[equation]{Theorem}
\newtheorem{corollary}[equation]{Corollary}
\newtheorem{lemma}[equation]{Lemma}
\newtheorem{proposition}[equation]{Proposition}
\theoremstyle{definition}
\newtheorem{definition}[equation]{Definition}
\newtheorem{example}[equation]{Example}
\newtheorem{remark}[equation]{Remark}
\newcommand{\R}{{\mathbb R}}
\newcommand{\N}{{\mathbb N}}
\newcommand{\Om}{\Omega}
\providecommand{\vint}[1]{\mathchoice
          {\mathop{\vrule width 5pt height 3 pt depth -2.5pt
                  \kern -9pt \kern 1pt\intop}\nolimits_{\kern -5pt{#1}}}
          {\mathop{\vrule width 5pt height 3 pt depth -2.6pt
                  \kern -6pt \intop}\nolimits_{\kern -3pt{#1}}}
          {\mathop{\vrule width 5pt height 3 pt depth -2.6pt
                  \kern -6pt \intop}\nolimits_{\kern -3pt{#1}}}
          {\mathop{\vrule width 5pt height 3 pt depth -2.6pt
                  \kern -6pt \intop}\nolimits_{\kern -3pt{#1}}}}
\newcommand{\eps}{\varepsilon}
\newcommand{\loc}{\mathrm{loc}}
\newcommand{\BV}{\mathrm{BV}}
\newcommand{\ch}{\text{\raise 1.3pt \hbox{$\chi$}\kern-0.2pt}}
\DeclareMathOperator{\Mod}{Mod}
\DeclareMathOperator{\AM}{AM}
\DeclareMathOperator{\diam}{diam}
\begin{document}
\title{Quasiconformal and Sobolev mappings\\
in non-Ahlfors regular metric spaces
}
\author{Panu Lahti}
\address{Panu Lahti,  Academy of Mathematics and Systems Science, Chinese Academy of Sciences,
	Beijing 100190, PR China, {\tt panulahti@amss.ac.cn}}
\author{Xiaodan Zhou}
\address{Xiaodan Zhou, Analysis on Metric Spaces Unit, Okinawa Institute of Science and Technology Graduate University, 1919-1, Onna-son, 
Okinawa 904-0495, Japan, {\tt xiaodan.zhou@oist.jp}}

\subjclass[2020]{30L10, 30L15, 46E36}
\keywords{Quasiconformal mapping, Newton-Sobolev mapping, Ahlfors regular space, weighted space,
equi-integrability}

\begin{abstract}
We show that a mapping $f\colon X\to Y$
satisfying the metric condition of quasiconformality outside suitable exceptional sets
is in the Newton-Sobolev class $N_{\loc}^{1,1}(X;Y)$. Contrary to previous works, we only assume
an asymptotic version of Ahlfors-regularity on $X,Y$. This allows many non-Ahlfors regular spaces,
such as weighted spaces and Fred Gehring's bowtie, to be included in the theory.
Unexpectedly, already in the classical setting of unweighted Euclidean spaces,
our theory detects Sobolev mappings that are not recognized by previous results.
\end{abstract}

\date{\today}
\maketitle

\section{Introduction}

Given two metric spaces $(X,d)$ and $(Y,d_Y)$ and a mapping $f\colon X\to Y$,
for every $x\in X$ and $r>0$ one defines
\[
L_f(x,r):=\sup\{d_Y(f(y),f(x))\colon d(y,x)\le r\}
\]
and
\[
l_f(x,r):=\inf\{d_Y(f(y),f(x))\colon d(y,x)\ge r\},
\]
and then
\[
H_f(x,r):=\frac{L_f(x,r)}{l_f(x,r)};
\]
we interpret this to be $\infty$ if the denominator is zero.
A homeomorphism $f\colon X\to Y$ is (metric) \emph{quasiconformal} if there is a number
$1\le H<\infty$ such that
\begin{equation}\label{eq:Hf}
H_f(x):=\limsup_{r\to 0} H_f(x,r)\le H
\end{equation}
for all $x\in X$.

Gehring \cite{Ge62,Ge} proved that a quasiconformal mapping
$f\colon \R^n\to \R^n$
is absolutely continuous on almost every line.
Generalizations of this result in Euclidean and Carnot-Carath\'eodory spaces include the work
by Marguilis--Mostow \cite{MaMo},
Balogh--Koskela \cite{BaKo},
Kallunki--Koskela \cite{KaKo},
Kallunki--Martio \cite{KaMa}, and
Koskela--Rogovin \cite{KoRo}.
In these papers it is shown that one does not need the condition $H_f(x)\le H$ at \emph{every}
point $x$, and that instead of $H_f$ one can consider
\[
h_f(x):=\liminf_{r\to 0} H_f(x,r).
\]

Additionally, one can consider more general metric measure spaces $X,Y$;
as noted by Balogh--Koskela--Rogovin \cite{BKR},
it is then a problem of general interest to find minimal assumptions
on the spaces $X,Y$, and on the mapping $f$, which guarantee absolute continuity on almost all curves.
These authors proved in \cite[Theorem 1.1]{BKR} that a homeomorphism $f$
belongs to the Newton--Sobolev space $N_{\loc}^{1,1}(X;Y)$ provided that
\begin{enumerate}
\item the metric measure spaces $X,Y$ are locally Ahlfors $Q$-regular, $Q>1$;
\item there exists an exceptional set $E$ of $\sigma$-finite $\mathcal H^{Q-1}$-measure such
	that $h_f<\infty$ on $X\setminus E$; 
\item $h_f\le H<\infty$ almost everywhere.
\end{enumerate}
In particular, unlike in previous works in metric spaces, by Heinonen--Koskela \cite{HK1,HK2}
and Heinonen--Koskela--Shanmugalingam--Tyson \cite[Theorem 9.8]{HKST},
no Poincar\'e inequality is assumed.
Condition (3) amounts to saying that $h_f\in L^{\infty}(X)$.
Williams \cite{Wi} improved this result by showing that it is in fact sufficient to replace condition (3) with
$h_f\in L_{\loc}^{Q/(Q-1)}(X)$.
However, in all of these papers, (local) Ahlfors $Q$-regularity is assumed.

We show that it is possible to further weaken all of these assumptions,
in particular largely remove the assumption of Ahlfors regularity.
Our main result is the following.

\begin{theorem}\label{thm:main theorem intro}
Let $(X,d,\mu)$ and $(Y,d_Y, \nu)$ be complete metric measure spaces. 
	Let $\Om\subset X$ be open and bounded.
	Assume $f\colon \Om\to Y$ is injective and continuous with $\nu(f(\Om))<\infty$, and:
	\begin{itemize}
	\item[(1)] Suppose that $(\Om,d)$ is metric doubling,
	and that there exists a set $E\subset\Om$ such
	that $\mu$ is asymptotically doubling in
	$\Om\setminus E$,
	and that in
	$\Om\setminus E$ there exist Borel functions $Q(x)>1$ and $R(x)>0$ such that
	\begin{equation}\label{eq:compatibility in intro}
	\limsup_{r\to 0}\frac{\mu(B(x,r))}{r^{Q(x)}}<R(x)\liminf_{r\to 0}\frac{\nu(B(f(x),r))}{r^{Q(x)}}
	\quad \textrm{for every }x\in \Om\setminus E.
	\end{equation}
		
	\item[(2)] Suppose that $E$ is the union of a countable set and a set
	with $\sigma$-finite codimension $1$ Hausdorff measure $\widehat{\mathcal H}$.
	
	\item[(3)] Finally assume that
	\[
	\frac{Q(\cdot)-1}{Q(\cdot)}(R(\cdot)\max\{h_f(\cdot),1\}^{Q(\cdot)})^{1/(Q(\cdot)-1)}\in L^1(\Om).
	\]
	\end{itemize}
	Then $f\in D^{1}(\Om;Y)$.
\end{theorem}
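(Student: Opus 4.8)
The plan is to follow the scheme of Balogh--Koskela--Rogovin \cite{BKR} and Williams \cite{Wi}: build from the metric distortion inequality a candidate upper gradient lying in $L^1(\Om)$, and then verify the upper gradient inequality along $1$-almost every curve. The difference is that the regularity of $\mu$ and $\nu$ is only asymptotic and is recorded in \eqref{eq:compatibility in intro}, with $Q,R$ merely Borel, so the estimates that are a single computation in the Ahlfors regular case must now be localized. First I would note that, since $f$ is continuous and injective with $\nu(f(\Om))<\infty$, the assignment $A\mapsto\mu_f(A):=\nu(f(A))$ (well defined because continuous injective images of Borel sets are $\nu$-measurable) is a finite Borel measure on $\Om$ with $\mu_f(\Om)\le\nu(f(\Om))$. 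On $\Om\setminus E$, where $\mu$ is asymptotically doubling, the Lebesgue--Radon--Nikodym differentiation theorem yields that
\[
J(x):=\lim_{r\to 0}\frac{\mu_f(B(x,r))}{\mu(B(x,r))}
\]
exists and is finite for $\mu$-a.e.\ $x\in\Om\setminus E$, with $\int_{\Om\setminus E}J\,d\mu\le\mu_f(\Om)<\infty$. I would also fix, via a Lusin/Egorov argument, a decomposition of $\Om\setminus E$ into countably many sets on which $J$, the two densities in \eqref{eq:compatibility in intro}, and the asymptotic doubling ratios are attained uniformly and $Q,R$ are nearly constant.

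\emph{The candidate and its $L^1$ bound.} Set $g(x):=\liminf_{r\to 0}L_f(x,r)/r$ on $\Om\setminus E$ and $g:=0$ on $E$; using a $\liminf$ here, rather than a $\limsup$, is what allows the $\liminf$ defining $h_f$ to be exploited. Fix $x\in\Om\setminus E$ with $J(x)<\infty$ and $h_f(x)<\infty$. From $f(B(x,r))\subset\overline B(f(x),L_f(x,r))$ on one side, and from $f(B(x,r))\supset B(f(x),l_f(x,r))\cap f(\Om)$ (by injectivity) together with the fact that, for $\mu$-a.e.\ $x$, $f(\Om)$ carries a definite fraction of the $\nu$-mass of small balls about $f(x)$ on the other, one estimates $l_f(x,r)^{Q(x)}\,\liminf_s\bigl(\nu(B(f(x),s))/s^{Q(x)}\bigr)\le(1+o(1))\,\mu_f(B(x,r))$; inserting $\mu_f(B(x,r))\le(J(x)+o(1))\,\mu(B(x,r))$ and $\mu(B(x,r))\le\bigl(\limsup_s\mu(B(x,s))/s^{Q(x)}+o(1)\bigr)r^{Q(x)}$, writing $L_f(x,r)=H_f(x,r)\,l_f(x,r)$, and letting $r\to 0$ so that $\liminf_r H_f(x,r)=h_f(x)$ while the quotient of the two densities stays below $R(x)$ by \eqref{eq:compatibility in intro}, one arrives at
\[
g(x)^{Q(x)}\le R(x)\,\max\{h_f(x),1\}^{Q(x)}\,J(x)\qquad\text{for $\mu$-a.e.\ }x\in\Om\setminus E.
\]
Taking $Q(x)$-th roots and applying Young's inequality with exponents $Q(x)$ and $Q(x)/(Q(x)-1)$,
\[
g(x)\le\tfrac{Q(x)-1}{Q(x)}\bigl(R(x)\max\{h_f(x),1\}^{Q(x)}\bigr)^{1/(Q(x)-1)}+\tfrac{1}{Q(x)}\,J(x),
\]
and the right-hand side lies in $L^1(\Om)$ by hypothesis (3) and by $\int_{\Om\setminus E}J\,d\mu<\infty$. (Hypothesis (3) also forces $h_f<\infty$, hence $g<\infty$, $\mu$-a.e.\ on $\Om\setminus E$.) Thus $g\in L^1(\Om)$.

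\emph{The upper gradient inequality.} I claim that $Cg$, for a suitable constant $C$, is a $1$-weak upper gradient of $f$, which gives $f\in D^{1}(\Om;Y)$. By hypothesis (2), the curves having positive length inside the $\sigma$-finite codimension $1$ part of $E$, or inside the $\mu$-null sets $\{h_f=\infty\}$, $\{g=\infty\}$, or with $\int_\gamma g\,ds=\infty$, together form a family of zero $1$-modulus and may be discarded: for a $\mu$-null set $N$ the function $\lambda\chi_N$ is admissible for $\{\gamma:\mathcal H^1(\gamma\cap N)\ge 1/\lambda\}$ with arbitrarily small integral, and the codimension $1$ part of $E$ is itself $\mu$-null; the countable part of $E$ is met by each rectifiable curve on a length-null set only, where it is harmless since $f\circ\gamma$ is continuous. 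For a remaining curve $\gamma$, at a.e.\ $t$ the point $\gamma(t)$ lies in $\Om\setminus E$ with $h_f(\gamma(t))<\infty$ and $g(\gamma(t))<\infty$, and — by the definition of $g$ as a $\liminf$ — there are radii $r\downarrow 0$ with $L_f(\gamma(t),r)\le(g(\gamma(t))+\eps)\,r$. Covering $\gamma$ by balls centred on $\gamma$ with such good radii, extracting a $5r$-type subcover by means of the metric doubling of $\Om$, and telescoping $d_Y(f(\gamma(b)),f(\gamma(a)))\le\sum_k L_f(\gamma(t_k),\cdot)\le C\sum_k(g(\gamma(t_k))+\eps)\,r_k\le\int_\gamma Cg\,ds+O(\eps)$ then yields at once the absolute continuity of $t\mapsto f(\gamma(t))$ and the upper gradient inequality.

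\emph{The main obstacle.} I expect the last step to be the crux. Without Ahlfors regularity one cannot convert the Hausdorff-measure smallness of $E$, or the pointwise asymptotic data on $\mu$ and $\nu$, into the needed modulus bound and covering control by one computation; the chaining must be run using only the metric doubling of $\Om$, keeping the errors from the non-doubling of $\mu,\nu$ and from the variation of $Q$ under control. A secondary delicate point is the comparison in the $L^1$ step of $\nu(f(B(x,r)))$ with $\nu(B(f(x),l_f(x,r)))$: since $f$ is only assumed injective and continuous, and so need not be open, the required lower bound on the measure of the image of a small ball has to be extracted from the metric properties of $f$ together with \eqref{eq:compatibility in intro}.
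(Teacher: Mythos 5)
Your proposal diverges from the paper's method in two places where the divergence creates genuine gaps, not merely alternative routes. The first is the $L^1$ bound. To relate $l_f(x,r)$ to the pullback measure $\mu_f(B(x,r))=\nu(f(B(x,r)))$ you need, as you note, that $f(\Om)$ carries a definite fraction of the $\nu$-mass of small balls about $f(x)$ for $\mu$-a.e.\ $x$. Nothing in the hypotheses gives this: $f$ is only injective and continuous, $\nu$ is not assumed (even asymptotically) doubling near $f(x)$, and even if a $\nu$-density theorem held on $f(\Om)$, a $\nu$-null exceptional set in $Y$ could pull back to a set of positive $\mu$-measure since no Luzin-type condition on $f^{-1}$ is assumed. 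This is not a ``secondary delicate point'' but the reason the paper never forms $\mu_f$ at all: instead it measures the \emph{balls} $B\bigl(f(x_l),L_f(x_l,r_l)/(2H_f^{\vee}(x_l,r_l))\bigr)$ directly in $Y$, bounds them from below by $C(x_l)\,(L_f/(2H_f^{\vee}))^{Q(x_l)}$ using the $\liminf$ half of \eqref{eq:compatibility in intro}, and sums them using the disjointness supplied by Lemmas \ref{lem:N countable collections} and \ref{lem:disjoint images}. (Your appeal to the Radon--Nikodym differentiation of $\mu_f$ against a merely asymptotically doubling $\mu$ also needs a citation to Federer's version of the Vitali covering theorem, but that part is repairable.)

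The second and more serious gap is the final step, where you claim that $Cg$ with $g(x)=\liminf_{r\to 0}L_f(x,r)/r$ is a $1$-weak upper gradient via a Vitali cover of $\gamma$ by balls with ``good'' radii and telescoping. The chain inequality produces $\sum_k(g(\gamma(t_k))+\eps)r_k$, and to dominate this by $C\int_{\gamma}g\,ds$ you would need $g(\gamma(t_k))\,r_k\le C\int_{\gamma\cap B_k}g\,ds$ for each $k$; this fails whenever $g$ is large at the chosen centers but small on most of $\gamma\cap B_k$, and no hypothesis prevents that. A pointwise $\liminf$-type dilatation being integrable does not by itself yield absolute continuity along curves (this is essentially the phenomenon behind the Cantor function and behind the Kallunki--Martio example quoted in Example \ref{ex:plane}). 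This obstruction is precisely why the paper (following \cite{BKR,Wi}) works with the non-pointwise objects $\rho_{i}=2\sum\frac{L_f(x_{k,j,l},r_{k,j,l})}{r_{k,j,l}}\ch_{2B_{k,j,l}}$, for which $\int_{\gamma}\rho_i\,ds\ge\sum_{B\cap\gamma\ne\emptyset}L_f\ge d_Y(f(\gamma(0)),f(\gamma(\ell_{\gamma})))$ holds by construction, and then must extract an actual $L^1$ upper gradient from the sequence. Since only an $L^1$ bound (no reflexivity) is available here, this forces the paper's central technical contribution: the proof that $\{\rho_i\}$ is equi-integrable, followed by Dunford--Pettis, Mazur, and Fuglede. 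Your proposal contains no substitute for this passage to the limit, so the argument does not close.
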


Here $D^1(\Om;Y)$ is the Dirichlet space, that is, $f$ is not required to be in $L^1(\Om;Y)$.
Note that in connected spaces always $h_f\ge 1$, but in our generality we cannot rule out
the rather anomalous case $h_f<1$, so we take the maximum.

The three assumptions in Theorem \ref{thm:main theorem} are much weaker than the
corresponding three assumptions in \cite[Theorem 1.1]{BKR}.
We compare them in the following remarks.

\begin{remark}
To the best of our knowledge, all previous results obtaining the absolute continuity on curves
for quasiconformal mappings assume 
	 Ahlfors regularity of the space. Our main theorem
	 replaces the Ahlfors regularity by a much relaxed condition. 
	The ``dimension'' $Q(x)$ is allowed to vary from
	point to point, but there needs to be a compatibility between
	the dimension of $X$ at a point $x$ and the dimension of $Y$ at $f(x)$, as given by
	\eqref{eq:compatibility in intro}.
	This allows including in the theory various spaces that are often considered in analysis on metric spaces,
	but have largely been excluded from the theory of quasiconformal mappings.
	Examples include weighted spaces as well as spaces achieved by
	``glueing'' spaces of possibly different dimensions,
	such as Fred Gehring's bowtie, see Examples \ref{ex:weights} and \ref{ex:bowtie}.
\end{remark}

\begin{remark}
Since we work in spaces without a specific dimension,
we consider a codimension $1$ Hausdorff measure $\widehat{\mathcal H}$, which however
essentially reduces to the $Q-1$-dimensional
Hausdorff measure in the Ahlfors $Q$-regular case.
The set $E$ can also contain any countable set; in our generality
even a single point could have infinite codimension $1$ Hausdorff measure.
\end{remark}

\begin{remark}
As mentioned before, Williams \cite{Wi} already showed that it is enough to
assume $h_f\in L_{\loc}^{Q/(Q-1)}(X)$, and our Condition (3) essentially reduces to this in the Ahlfors
$Q$-regular case, $Q>1$. But Condition (3) also takes into account the variation in the dimension $Q$
and in the ``density'' $R$, which may occur in our generality.
\end{remark}

Balogh--Koskela--Rogovin \cite{BKR} and Williams \cite{Wi}
both rely on a well-established method of constructing a sequence
of ``almost upper gradients''
$\{\rho_i\}_{i=1}^{\infty}$ that is bounded in $L^p(K)$ for compact sets $K\subset X$, for some $1< p\le Q$.
As noted by Williams, Ahlfors $Q$-regularity with $Q>1$ is unavoidable for this to work. 
Then by the reflexivity of $L^p$-spaces, $p>1$, a subsequence converges to a $p$-weak upper gradient of $f$.
However, in order to establish that $f\in N_{\loc}^{1,1}(X;Y)$,
one of course only needs an upper gradient belonging to $L_{\loc}^1(X)$, and not $L_{\loc}^p(X)$.
This raises the question of whether one really needs to assume Ahlfors $Q$-regularity with
$Q>1$, but on the other hand there are counterexamples when $Q=1$, see Beurling--Ahlfors \cite{BA}.

To circumvent the problem arising in the case $Q=1$, we essentially require $Q(x)>1$
\emph{pointwise}.
We also construct a sequence of ``almost upper gradients'' $\{\rho_i\}_{i=1}^{\infty}$,
but since we do not assume Ahlfors regularity, at first we only
show the sequence to be bounded in $L^1(\Om)$.
Then, due to the lack of reflexivity of $L^1$, we need a careful analysis to show that the sequence is
\emph{equi-integrable}. This guarantees the existence of a weakly converging subsequence,
so that at the limit we obtain a $1$-weak upper gradient for $f$, establishing $f\in D^1(\Om;Y)$.

In Theorem \ref{thm:main theorem} we give the main theorem in an even more general form.
In particular, our formulation makes it easy to consider spaces obtained by adding very general
weights to Ahlfors $Q$-regular spaces. As a corollary of our main theorem, we obtain the following.

\begin{corollary}\label{cor:weights}
	Let $(X_0,d,\mu_0)$ and $(Y_0,d_Y,\nu_0)$ be Ahlfors $Q$-regular spaces, with $Q>1$.
	Let $X$ and $Y$ be the same metric spaces but equipped with the weighted measures $d\mu=w\,d\mu_0$ and $d\nu=w_Y\,d\nu_0$,
	where the weights $w,w_Y$ are represented by \eqref{eq:w representative}
	and \eqref{eq:wY representative}, respectively.
	Let $\Om\subset X$ be open and bounded and let
	$f\colon \Om\to Y$ be injective and continuous, with $\nu(f(\Om))<\infty$.
	Suppose there is a set $E\subset \Om$ that is the union of a countable set and a set
	with $\sigma$-finite $\widehat{\mathcal H}$-measure,
	such that $h_f<\infty$ in $X\setminus E$.
	Suppose also that $w\colon \Om\setminus E\to (0,\infty)$ is locally bounded away from zero and
	\[
	\left(\frac{[w(\cdot)\max\{h_f(\cdot),1\}]^{Q}}{w_Y(f(\cdot))}\right)^{1/(Q-1)}\in L^1(\Om,\mu_0).
	\]
	Then $f\in D^{1}(\Om;Y)$.
\end{corollary}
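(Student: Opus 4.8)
The plan is to derive the corollary from Theorem~\ref{thm:main theorem}, applied with the constant exponent $Q(\cdot)\equiv Q$; the standing assumptions on $\Om$ and $f$, as well as $h_f<\infty$ on $\Om\setminus E$, carry over verbatim. Fix $C_0\ge 1$ with $C_0^{-1}r^{Q}\le\mu_0(B(x,r))\le C_0 r^{Q}$ and $C_0^{-1}r^{Q}\le\nu_0(B(y,r))\le C_0 r^{Q}$ for all $x\in X_0$, $y\in Y_0$ and all small $r>0$. By the choice of precise representatives in \eqref{eq:w representative} and \eqref{eq:wY representative}, for $x\in\Om\setminus E$ one has $\limsup_{r\to0}\vint{B(x,r)}w\,d\mu_0=w(x)$ and $\liminf_{r\to0}\vint{B(f(x),r)}w_Y\,d\nu_0=w_Y(f(x))$. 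Since $\mu(B(x,r))/r^{Q}=\bigl(\mu_0(B(x,r))/r^{Q}\bigr)\vint{B(x,r)}w\,d\mu_0$, and likewise for $\nu$, the Ahlfors bounds give
\[
\limsup_{r\to 0}\frac{\mu(B(x,r))}{r^{Q}}\le C_0\,w(x),\qquad
\liminf_{r\to 0}\frac{\nu(B(f(x),r))}{r^{Q}}\ge C_0^{-1}\,w_Y(f(x)).
\]
I would then put $R(x):=2C_0^{2}\,w(x)/w_Y(f(x))$, a positive, finite Borel function on $\Om\setminus E$ (here $w\in(0,\infty)$ there by assumption, and $w_Y\circ f>0$ on $\Om\setminus E$, see below). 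With this choice \eqref{eq:compatibility in intro} holds on $\Om\setminus E$, because $C_0 w(x)<R(x)\,C_0^{-1}w_Y(f(x))$.

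Next I would check the remaining hypotheses. The metric doubling of $(\Om,d)$ is immediate, since the Ahlfors $Q$-regular measure $\mu_0$ makes $X_0$—hence $\Om$—metrically doubling. Hypothesis~(2) of Theorem~\ref{thm:main theorem} is precisely the present assumption on $E$, as $\widehat{\mathcal H}$ is comparable to $\mathcal H^{Q-1}$ in the Ahlfors $Q$-regular setting; in particular $E$ is $\mu_0$-negligible, since countable sets are $\mu_0$-null (no atoms) and a set of $\sigma$-finite $\mathcal H^{Q-1}$-measure has zero $\mathcal H^{Q}$-measure, hence zero $\mu_0$-measure. For the asymptotic doubling of $\mu$ in $\Om\setminus E$, fix $x\in\Om\setminus E$ and choose $\delta,\rho>0$ with $w\ge\delta$ on $B(x,\rho)\setminus E$. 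Using $\mu_0(E)=0$, for $r<\rho$ we get $\mu(B(x,r))=\int_{B(x,r)}w\,d\mu_0\ge\delta\mu_0(B(x,r))\ge\delta C_0^{-1}r^{Q}$, while $w(x)<\infty$ gives, for all small $r$, $\mu(B(x,2r))\le C_0(2r)^{Q}\vint{B(x,2r)}w\,d\mu_0\le 2^{Q+1}C_0\,w(x)\,r^{Q}$; dividing, $\mu(B(x,2r))/\mu(B(x,r))\le 2^{Q+1}C_0^{2}w(x)/\delta$ for small $r$, so $\mu$ is asymptotically doubling at $x$.

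It remains to verify Condition~(3) of Theorem~\ref{thm:main theorem}. With $Q(\cdot)\equiv Q$ and the $R$ above,
\[
\frac{Q-1}{Q}\bigl(R(x)\max\{h_f(x),1\}^{Q}\bigr)^{1/(Q-1)}
=\frac{Q-1}{Q}\,(2C_0^{2})^{1/(Q-1)}\left(\frac{w(x)\,\max\{h_f(x),1\}^{Q}}{w_Y(f(x))}\right)^{1/(Q-1)}.
\]
Because the integrability in Theorem~\ref{thm:main theorem} is with respect to the ambient weighted measure $\mu=w\,\mu_0$ on $X$, integrating this function over $\Om$ against $d\mu$ introduces an extra factor $w$, and since $\tfrac{1}{Q-1}+1=\tfrac{Q}{Q-1}$ this turns $w^{1/(Q-1)}$ into $w^{Q/(Q-1)}$. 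Hence the integral equals a constant depending only on $Q$ and $C_0$ times $\int_{\Om}\bigl([w(\cdot)\max\{h_f(\cdot),1\}]^{Q}/w_Y(f(\cdot))\bigr)^{1/(Q-1)}\,d\mu_0$, which is finite by the hypothesis of the corollary. Thus all hypotheses of Theorem~\ref{thm:main theorem} hold, and $f\in D^{1}(\Om;Y)$.

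The step needing the most care is this final reconciliation of Condition~(3): it is crucial that Theorem~\ref{thm:main theorem} measures integrability against the weighted measure $\mu$ and not $\mu_0$, as otherwise the power of $w$ would be off by one and the corollary's assumption would not suffice. Two more minor points: the asymptotic doubling of $\mu$ at a point of $\Om\setminus E$ genuinely uses \emph{both} that $w$ is locally bounded away from zero and that $w$ is finite there; and one should confirm $w_Y\circ f>0$ on $\Om\setminus E$, which is a mild point — any such zero forces the Condition~(3) integrand to be $+\infty$, so the exceptional set is $\mu_0$-null and may be absorbed into $E$. Everything else is routine bookkeeping with the Ahlfors regularity constant.
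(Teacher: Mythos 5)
Your reduction to Theorem \ref{thm:main theorem} with $Q(\cdot)\equiv Q$ and $R(x)=2C_0^2\,w(x)/w_Y(f(x))$ is the right skeleton, and your bookkeeping for Condition~(3) (the extra factor of $w$ coming from integrating against $d\mu$ rather than $d\mu_0$) is correct. The gap is in your verification of the doubling hypothesis. Both the definition of ``asymptotically doubling'' and condition \eqref{eq:two measure doubling} require a \emph{single} constant valid at every $x\in\Om\setminus E$, and that constant enters multiplicatively into the $L^1$-bounds for the almost upper gradients in the proof of Theorem \ref{thm:main theorem}; a pointwise-finite limsup is not enough. Your bound $2^{Q+1}C_0^{2}w(x)/\delta$ depends on $x$ through both $w(x)$ (not assumed bounded above) and the local lower bound $\delta=\delta(x)$ (not bounded below), so it is not uniform. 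This is precisely the obstruction the paper's proof is built around: it introduces an auxiliary weight $\widetilde w\ge w$, inflated to the value $2j$ on neighborhoods of the bad set intersected with the sets $D_{2j}$ of \eqref{eq:Dj}, so that $\mu(B(x,20r))/\widetilde\mu(B(x,r))\le 2\cdot 20^{Q}C_A^{2}$ with the $j$'s cancelling --- this is the whole reason Theorem \ref{thm:main theorem} is stated with two measures $\mu\le\widetilde\mu\ll\mu$. Your argument as written does not close this.

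Two smaller problems. First, you deduce $\mu_0(E)=0$ from ``$\widehat{\mathcal H}$ is comparable to $\mathcal H^{Q-1}$,'' but $\widehat{\mathcal H}$ is defined (Definition \ref{def:centered codimension one measure}) using the \emph{weighted} measure $\mu=w\,d\mu_0$, which is not Ahlfors regular here, so that comparison is unjustified; the paper only ever uses $\mu(E)=\widetilde\mu(E)=0$, and its lower bound for $\widetilde\mu(B(x,r))$ comes from the density estimate \eqref{eq:density estimate} for $D_{2j}$, not from ``$w\ge\delta$ off a $\mu_0$-null set.'' Your inequality $\mu(B(x,r))\ge\delta\mu_0(B(x,r))$ silently requires $\mu_0(E\cap B(x,\rho))=0$. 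Second, when $w_Y(f(x))=0$ you propose absorbing the offending set into $E$, but $E$ must have the specific smallness of \eqref{eq:E assumption} (countable plus $\sigma$-finite $\widehat{\mathcal H}$-measure), which a general $\mu_0$-null set need not have; the paper instead enlarges $w_Y$, which is harmless since membership in $D^{1}(\Om;Y)$ does not depend on the measure carried by $Y$.
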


We observe that very general weights $w$ are allowed in the space $X$,
and in particular we can extend \cite[Theorem 1.1]{BKR} to a wide range of weighted spaces,
see Corollary \ref{cor:Linfinity}.
Meanwhile, considering the space $Y$, we observe that $f$ belonging to the class $D^{1}(\Om;Y)$
or $N^{1,1}(\Om;Y)$
does not depend at all on what measure the space $Y$ is equipped with.
Thus we are allowed to choose \emph{any} weight $w_Y$, as long as it gives a locally finite
measure. Already on the unweighted plane, there are simple examples of $N_{\loc}^{1,1}$-mappings
$f\colon \R^2\to\R^2$ for which $h_f$ is not in $L_{\loc}^2(\R^2)$ or even in
$L_{\loc}^1(\R^2)$, and so the existing results,
e.g. those of \cite{BKR}, \cite{KoRo}, and \cite{Wi}, do not tell us that $f\in N^{1,1}_{\loc}(\R^2;\R^2)$.
However, we can detect the Sobolev property of many such mappings
from Corollary \ref{cor:weights}, simply by equipping
$Y$ with a suitable weight $w_Y$;
see Example \ref{ex:plane}.

The paper is organized as follows: we introduce notation and definitions in Section \ref{sec:Prelis}
and prove a Besicovitch-type covering lemma in Section \ref{sec:covering}.
The proof of the main result Theorem \ref{thm:main theorem}
is presented in Section \ref{sec:main theorem}, and a
discussion on the size of the exceptional set
is given in Section \ref{sec:exceptional set}.
Finally in Section \ref{sec:examples},
we give various examples of spaces that are not locally or
globally doubling or Ahlfors regular but satisfy the asymptotic condition in
Theorem \ref{thm:main theorem}, and we prove
Corollary \ref{cor:weights}.

\section{Notation and definitions}\label{sec:Prelis}

Throughout the paper, we consider two metric measure spaces
$(X,d,\mu)$ and $(Y,d_Y,\nu)$, where $\mu$ and $\nu$ are Borel regular outer measures.
We understand balls $B(x,r)$ to be open and we assume that the measure of every ball is finite
(in both spaces).
For a ball $B=B(x,r)$, we sometimes denote $2B:=B(x,2r)$;
note that in a metric space, a ball (as a set) might not have a unique center and radius, but
when using this abbreviation we will work with balls for which these have been specified.

We say that $X$ is metric doubling if there is a constant $M\in\N$ such that
every ball $B(x,r)$ can be covered by $M$ balls of radius $r/2$.
This definition works also for subsets $A\subset X$, by considering $(A,d)$ as a metric space.

We say that $\mu$ is locally doubling if for every $z\in X$
there is a scale $R>0$
and a constant $C_z\ge 1$ such that
\[
\mu(B(x,2r))\le C_z\mu(B(x,r))
\]
for all $x\in B(z,R)$ and $0<r\le R$.

Given $Q\ge 1$, we say that $\mu$ is locally Ahlfors $Q$-regular 
if for every $z\in X$ there is a scale $R>0$
and a constant $C_z\ge 1$ such that
\[
C_z^{-1}r^Q\le \mu(B(x,r))\le C_z r^Q
\]
for all $x\in B(z,R)$ and $0<r\le R$.

If these conditions hold for every $x\in X$ and $0<r<\diam X$ with $C_z$ replaced by a uniform constant,
we say that $\mu$ is doubling, or that $X$ is Ahlfors $Q$-regular.

We will only work with the following much weaker notion of a doubling measure.

\begin{definition}
We say that $\mu$ is asymptotically doubling in a set $A\subset X$ if there exists a constant
$C_A> 1$
such that
\[
\limsup_{r\to 0}\frac{\mu(B(x,2r))}{\mu(B(x,r))}< C_A\quad\textrm{for all }x\in A.
\]
\end{definition}

The $n$-dimensional Lebesgue measure is denoted by $\mathcal L^n$.
The
$s$-dimensional Hausdorff content is denoted by $\mathcal H^s_R$, $R>0$,
and the corresponding Hausdorff measure by $\mathcal H^s$;
these definitions extend automatically from Euclidean spaces to metric spaces.

A continuous mapping from a compact interval into $X$ is said to be a rectifiable curve if it has finite length.
A rectifiable curve $\gamma$ always admits an arc-length parametrization,
so that we get a curve $\gamma\colon [0,\ell_{\gamma}]\to X$
(for a proof, see e.g. \cite[Theorem 3.2]{Haj}).
We will only consider curves that are rectifiable and
arc-length parametrized.

If $\gamma\colon [0,\ell_{\gamma}]\to X$ is a curve and
$g\colon X\to [0, \infty]$ is a Borel function, we define
\[
\int_{\gamma} g\,ds:=\int_0^{\ell_{\gamma}}g(\gamma(s))\,ds.
\]
The $1$-modulus of a family of curves $\Gamma$ is defined by
\[
\Mod_{1}(\Gamma):=\inf\int_{X}\rho\, d\mu,
\]
where the infimum is taken over all nonnegative Borel functions $\rho \colon X\to [0, \infty]$
such that $\int_{\gamma}\rho\,ds\ge 1$ for every curve $\gamma\in\Gamma$.
If a property holds apart from a curve family with zero $1$-modulus, we say that it holds for
$1$-a.e. curve.
The $1$-modulus is easily seen to be subadditive.

\begin{definition}\label{def:equiintegrability}
	Given a $\mu$-measurable set $H\subset X$, a sequence of nonnegative
	functions $\{g_i\}_{i=1}^{\infty}$
	in $L^1(H)$ is equi-integrable if the following two conditions hold:
	\begin{itemize}
		\item for every $\eps>0$ there exists a $\mu$-measurable set $D\subset H$ such that $\mu(D)<\infty$ and
		\[
		\int_{H\setminus D}g_i\,d\mu<\eps \quad\textrm{for all }i\in\N;
		\]
		\item for every $\eps>0$ there exists $\delta>0$ such that if $A\subset H$ is $\mu$-measurable with $\mu(A)<\delta$, then
		\[
		\int_{A}g_i\,d\mu<\eps \quad\textrm{for all }i\in\N.
		\]
	\end{itemize}
\end{definition}

Next we give (special cases of) the Dunford--Pettis theorem,
Mazur's lemma, and Fuglede's lemma, see e.g.
\cite[Theorem 1.38]{AFP},
\cite[Lemma 6.1]{BB},
and \cite[Lemma 2.1]{BB}, respectively.
Let $H\subset X$ be a $\mu$-measurable set.

\begin{theorem}\label{thm:dunford-pettis}
Let $\{g_i\}_{i=1}^{\infty}$ be an equi-integrable sequence of nonnegative functions in $L^1(H)$.
Then there exists a subsequence such that $g_{i_j}\to g$ weakly in $L^1(H)$.
\end{theorem}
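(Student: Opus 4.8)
The plan is to prove this special case of the Dunford--Pettis theorem directly from the two equi-integrability conditions, using only the weak sequential compactness of bounded sequences in the Hilbert space $L^2$ over a finite-measure set, together with a truncation argument.

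First note that the hypotheses force $C:=\sup_i\|g_i\|_{L^1(H)}<\infty$: applying the first condition with $\eps=1$ gives a set $D$ of finite measure with $\int_{H\setminus D}g_i<1$ for all $i$, and splitting $D$ into finitely many pieces of measure below the $\delta$ that the second condition associates to $\eps=1$ bounds $\int_D g_i$ uniformly. Moreover, applying the first condition with $\eps=1/n$ and replacing the resulting set by the union of the first $n$ of them, we obtain an increasing sequence $D_1\subset D_2\subset\cdots$ with $\mu(D_n)<\infty$ and $\int_{H\setminus D_n}g_i<1/n$ for every $i$; putting $D:=\bigcup_n D_n$, each $g_i$ vanishes $\mu$-a.e.\ on $H\setminus D$, so it suffices to extract a subsequence converging weakly in $L^1(D)$ and to extend its limit by zero.

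Fix $n$. For each $M\in\N$ the truncations $g_i^M:=\min\{g_i,M\}$ form a sequence bounded by $M\mu(D_n)^{1/2}$ in $L^2(D_n)$; by weak sequential compactness in this Hilbert space and a diagonal extraction over $M$, some subsequence has $g_i^M$ converging weakly in $L^2(D_n)$ for every $M$ at once, hence --- since $\mu(D_n)<\infty$ makes $L^\infty(D_n)\subset L^2(D_n)$ --- weakly in $L^1(D_n)$, to limits $h_M^{(n)}\in L^1(D_n)$. These are nonnegative, nondecreasing in $M$, and satisfy $\int_{D_n}h_M^{(n)}=\lim_i\int_{D_n}g_i^M\le C$, so $h^{(n)}:=\lim_M h_M^{(n)}\in L^1(D_n)$ by monotone convergence. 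To pass from the truncations back to the $g_i$, estimate for $\phi\in L^\infty(D_n)$ with $\|\phi\|_\infty\le1$:
\[
\Big|\int_{D_n}g_i\phi-\int_{D_n}h^{(n)}\phi\Big|\le\int_{\{g_i>M\}}g_i+\Big|\int_{D_n}g_i^M\phi-\int_{D_n}h_M^{(n)}\phi\Big|+\int_{D_n}\big(h^{(n)}-h_M^{(n)}\big).
\]
Since $\mu(\{g_i>M\})\le C/M$ uniformly in $i$, the second equi-integrability condition makes the first term uniformly small for large $M$; the last term is small for large $M$; and the middle term tends to $0$ as $i\to\infty$ for fixed $M$. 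Hence $g_i\to h^{(n)}$ weakly in $L^1(D_n)$ along this subsequence.

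Diagonalizing over $n$ produces one subsequence, still written $\{g_i\}$, with $g_i\to h^{(n)}$ weakly in $L^1(D_n)$ for all $n$; uniqueness of weak limits gives $h^{(n+1)}=h^{(n)}$ on $D_n$, so the $h^{(n)}$ patch to a nonnegative $g$ on $D$ with $\int_D g=\sup_n\int_{D_n}h^{(n)}\le C$, i.e.\ $g\in L^1(D)\subset L^1(H)$. Finally, for $\phi\in L^\infty(H)$ and any $n$,
\[
\Big|\int_H g_i\phi-\int_H g\phi\Big|\le\Big|\int_{D_n}(g_i-g)\phi\Big|+\|\phi\|_\infty\int_{H\setminus D_n}g_i+\|\phi\|_\infty\int_{H\setminus D_n}g,
\]
where the second term is at most $\|\phi\|_\infty/n$, the third tends to $0$ as $n\to\infty$ since $g\in L^1$, and the first tends to $0$ as $i\to\infty$ for fixed $n$; choosing $n$ large and then $i$ large gives $g_i\to g$ weakly in $L^1(H)$. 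The crux is the truncation step: the Hilbert-space input only handles the bounded parts $g_i^M$, and it is precisely the uniform absolute continuity that keeps the tails $\int_{\{g_i>M\}}g_i$ uniformly small --- without it the weak limit could escape $L^1$ into $(L^\infty)^*$ --- while the tightness condition separately confines the mass to the $\sigma$-finite set $D$.
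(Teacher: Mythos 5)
The paper does not prove this statement at all: it is quoted as a special case of the Dunford--Pettis theorem with a citation to \cite[Theorem 1.38]{AFP}. Your proposal therefore supplies something the paper deliberately outsources. The argument you give --- extract weak $L^2$ limits of the truncations $g_i^M=\min\{g_i,M\}$ on an exhaustion $D_n\nearrow D$ by finite-measure sets, pass the limits up in $M$ by monotonicity, and use the uniform smallness of $\int_{\{g_i>M\}}g_i$ (from Chebyshev plus the second equi-integrability condition) to transfer weak convergence from the truncations to the $g_i$ themselves, then diagonalize in $n$ and control the tails outside $D_n$ by the first condition --- is a correct and standard elementary route to the compactness half of Dunford--Pettis; the body of the proof (Steps on $D_n$, the three-term estimates, the patching of the limits $h^{(n)}$) is sound.

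The one genuine weak point is the opening claim that the two equi-integrability conditions alone force $C:=\sup_i\|g_i\|_{L^1(H)}<\infty$. Your derivation splits the finite-measure set $D$ into finitely many pieces each of measure below the $\delta$ associated to $\eps=1$; this is only possible when $\mu\mres D$ has no atoms of measure $\ge\delta$. If $\mu$ has atoms the implication fails outright: for $H=\{x_0\}$ with $\mu(\{x_0\})=1$ and $g_i:=i\,\ch_{\{x_0\}}$, both conditions of Definition \ref{def:equiintegrability} hold (the second vacuously, with $\delta=1/2$), yet the sequence is unbounded in $L^1$ and has no weakly convergent subsequence. So the theorem as literally stated needs either nonatomicity of $\mu$ or boundedness in $L^1$ as an explicit hypothesis --- the latter is exactly how \cite[Theorem 1.38]{AFP} is formulated, and in the paper's only application the sequence $\{g_i\}$ is shown to be bounded in $L^1(\Om)$ before equi-integrability is invoked, so nothing downstream is affected. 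You should either add the hypothesis $\sup_i\|g_i\|_{L^1(H)}<\infty$ (and delete the first paragraph's derivation of it), or restrict to nonatomic $\mu$; with that adjustment, and noting that the constant $C$ is used throughout the later steps, the rest of your proof goes through as written.
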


\begin{lemma}\label{lem:Mazur lemma}
	Let $\{g_i\}_{i\in\N}$ be a sequence
	with $g_i\to g$ weakly in $L^1(H)$.
	Then there exist convex combinations $\widehat{g}_i:=\sum_{j=i}^{N_i}a_{i,j}g_j$,
	for some $N_i\in\N$,
	such that $\widehat{g}_i\to g$ in $L^1(H)$.
\end{lemma}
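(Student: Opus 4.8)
The plan is to deduce this from the Hahn--Banach separation theorem, through the classical principle that a convex subset of a normed space has the same closure in the norm topology as in the weak topology. In the concrete case at hand, recalling that $(L^1(H))^*=L^\infty(H)$ when $\mu$ is $\sigma$-finite on $H$, weak convergence $g_i\to g$ means $\int_H g_i\varphi\,d\mu\to\int_H g\varphi\,d\mu$ for every $\varphi\in L^\infty(H)$.

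First I would fix $i\in\N$ and let $C_i$ be the convex hull of the tail $\{g_j:j\ge i\}$, i.e.\ the set of all finite sums $\sum_j a_j g_j$ with $a_j\ge 0$, $\sum_j a_j=1$, and $a_j=0$ for $j<i$. The tail sequence $(g_j)_{j\ge i}$ still converges weakly to $g$, so $g$ lies in the weak closure of $\{g_j:j\ge i\}$, and hence in the weak closure of the larger set $C_i$.

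Then I would carry out the separation step, which is the crux. If $h$ did not belong to the norm closure $\overline{C_i}$, then, $C_i$ being a nonempty convex set, Hahn--Banach would furnish $\varphi\in L^\infty(H)$ and $\alpha\in\R$ with $\int_H u\varphi\,d\mu\le\alpha<\int_H h\varphi\,d\mu$ for all $u\in C_i$; this, however, forces $h$ out of the weak closure of $C_i$ as well. Contrapositively, the weak and norm closures of $C_i$ coincide, so the previous paragraph gives $g\in\overline{C_i}$ in the $L^1(H)$-norm. Therefore, for each $i$ we may pick $\widehat{g}_i\in C_i$ with $\|\widehat{g}_i-g\|_{L^1(H)}<1/i$; writing it as $\widehat{g}_i=\sum_{j=i}^{N_i}a_{i,j}g_j$ with $a_{i,j}\ge 0$ and $\sum_{j=i}^{N_i}a_{i,j}=1$ (inserting zero coefficients if needed so that the index set is a full interval $\{i,\dots,N_i\}$), we obtain $\widehat{g}_i\to g$ in $L^1(H)$, as claimed.

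I expect the separation step to be the only nontrivial point: it is exactly the standard proof that the weak and strong closures of a convex set agree, resting on Hahn--Banach (and, if one wants the functional realized concretely, on $(L^1(H))^*=L^\infty(H)$). Passing to the tail, padding convex combinations with zeros, and extracting $\widehat{g}_i$ by a $1/i$-approximation are all routine; alternatively one may simply invoke the cited \cite[Lemma 6.1]{BB}.
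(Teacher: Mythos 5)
Your argument is correct. The paper does not prove this lemma at all --- it simply cites \cite[Lemma 6.1]{BB} --- so there is no in-paper proof to compare against; what you have written is the standard proof of Mazur's lemma via Hahn--Banach separation (the norm closure and the weak closure of a convex set coincide), which is precisely the argument underlying the cited reference, and your handling of the tails $\{g_j : j\ge i\}$ matches the form of the statement, in which the $i$-th convex combination only uses indices $j\ge i$. One cosmetic remark: the identification $(L^1(H))^*=L^\infty(H)$, and hence the $\sigma$-finiteness caveat, is not actually needed, since weak convergence is by definition convergence against all of $(L^1(H))^*$ and the geometric Hahn--Banach theorem already produces the separating functional in that dual space; as you note yourself, realizing it concretely as an $L^\infty$ function is optional.
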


\begin{lemma}\label{lem:Fuglede lemma}
Let $\{g_i\}_{i=1}^{\infty}$ be a sequence of functions with $g_i\to g$ in $L^1(H)$.
Then for $1$-a.e. curve $\gamma$ in $H$, we have
\[
\int_{\gamma}g_i\,ds\to \int_{\gamma}g\,ds\quad\textrm{as }i\to\infty.
\]
\end{lemma}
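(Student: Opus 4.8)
The plan is to follow the classical modulus argument. First I would reduce to nonnegative integrands and to the case $g=0$: writing $h_i:=|g_i-g|$, we have $h_i\ge 0$ with $\|h_i\|_{L^1(H)}\to 0$, and whenever the relevant curve integrals are finite,
\[
\left|\int_\gamma g_i\,ds-\int_\gamma g\,ds\right|=\left|\int_\gamma (g_i-g)\,ds\right|\le \int_\gamma h_i\,ds.
\]
Choosing Borel representatives and extending by zero outside $H$ makes all the curve integrals well defined, and since $|g|,|g_i|\in L^1(H)$ the quantities $\int_\gamma|g|\,ds$ and $\int_\gamma|g_i|\,ds$ are finite outside a curve family of zero $1$-modulus (by the estimate below), so the signed integrals make sense for $1$-a.e.\ $\gamma$. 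Thus it suffices to prove that $\int_\gamma h_i\,ds\to 0$ for $1$-a.e.\ curve $\gamma$ in $H$.

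The crux is a Chebyshev-type modulus bound: for a nonnegative Borel function $h$ (extended by zero to $X$) and $\lambda>0$, the curve family
\[
\Gamma(h,\lambda):=\Big\{\gamma \text{ in } H:\int_\gamma h\,ds\ge \lambda\Big\}
\]
satisfies $\Mod_{1}(\Gamma(h,\lambda))\le \lambda^{-1}\|h\|_{L^1(H)}$. Indeed, $\rho:=h/\lambda$ is admissible for $\Gamma(h,\lambda)$, since $\int_\gamma\rho\,ds\ge 1$ whenever $\int_\gamma h\,ds\ge\lambda$; hence $\Mod_{1}(\Gamma(h,\lambda))\le\int_X (h/\lambda)\,d\mu=\lambda^{-1}\|h\|_{L^1(H)}$. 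Applying this to $|g|$ and to each $|g_i|$ and letting $\lambda\to\infty$ also yields the finiteness asserted above.

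Finally I would combine this estimate with a Borel--Cantelli argument. Passing to a subsequence and relabelling, I may assume $\|h_i\|_{L^1(H)}\le 4^{-i}$. Taking $\lambda=2^{-i}$ gives $\Mod_{1}(\Gamma(h_i,2^{-i}))\le 2^{-i}$, so by countable subadditivity of the $1$-modulus
\[
\Mod_{1}\Big(\bigcup_{i\ge N}\Gamma(h_i,2^{-i})\Big)\le\sum_{i\ge N}2^{-i}=2^{1-N},
\]
which tends to $0$ as $N\to\infty$. Hence $\Gamma_\infty:=\bigcap_{N}\bigcup_{i\ge N}\Gamma(h_i,2^{-i})$ has $\Mod_{1}(\Gamma_\infty)=0$, and for every curve $\gamma\notin\Gamma_\infty$ there is $N$ with $\int_\gamma h_i\,ds<2^{-i}$ for all $i\ge N$, so $\int_\gamma h_i\,ds\to 0$ and therefore $\int_\gamma g_i\,ds\to\int_\gamma g\,ds$. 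The main obstacle is exactly the modulus estimate together with the quantitative matching of the thresholds $\lambda=2^{-i}$ against the norms $\|h_i\|\le 4^{-i}$, which is what renders the exceptional families summable; the passage to a subsequence is the standard device here (and all that $L^1$-convergence affords, as in \cite[Lemma 2.1]{BB}), the conclusion for $1$-a.e.\ curve holding along that subsequence.
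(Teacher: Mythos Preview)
The paper does not give its own proof of this lemma; it simply cites \cite[Lemma~2.1]{BB}. Your argument is precisely the standard proof found there (Chebyshev-type modulus estimate plus a Borel--Cantelli summation after extracting a fast subsequence), so there is nothing to compare.

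One point deserves emphasis, and you already flag it: the conclusion you obtain holds only along a subsequence, not for the original sequence. This is not a defect in your argument but a genuine limitation---the full-sequence statement as written in the paper is in fact false. For instance, on $[0,1]^2$ take $g_i(x_1,x_2)=\ch_{A_i}(x_1)$ with $\{A_i\}$ a typewriter sequence in $[0,1]$; then $g_i\to 0$ in $L^1$, yet along the vertical segment $\gamma_a(t)=(a,t)$ one has $\int_{\gamma_a}g_i\,ds=\ch_{A_i}(a)$, which fails to converge for a.e.\ $a$, and the family of vertical segments has positive $1$-modulus. The version in \cite[Lemma~2.1]{BB} is indeed stated with a subsequence, and that is all the paper needs: in the proof of Theorem~\ref{thm:main theorem} the lemma is applied to the sequence $\widehat{g}_i$ only to extract a limiting $1$-weak upper gradient, for which a subsequence suffices. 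So your proof is correct and complete for the result actually required.
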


Moreover, we will need the following Vitali-Carath\'eodory theorem;
for a proof see e.g. \cite[p. 108]{HKSTbook}.
The symbol $\Om\subset X$ will always denote an open set.

\begin{theorem}\label{thm:VitaliCar}
Let $\rho\in L^1(\Om)$. Then there exists a sequence $\{\rho_i\}_{i\in \N}$
of lower semicontinuous functions
on $\Om$ such that $\rho\le \rho_{i+1}\le\rho_i$ for all $i\in\N$, and
$\rho_i\to\rho$ in $L^1(\Om)$.
\end{theorem}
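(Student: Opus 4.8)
The plan is to prove this classical Vitali--Carath\'eodory statement (here in a metric measure space rather than $\R^n$) in three soft steps: reduce to the case $\rho\ge 0$; for each $\eps>0$ produce a single lower semicontinuous function lying above $\rho$ and within $\eps$ of it in $L^1(\Om)$; and finally pass to a nonincreasing sequence by taking finite minima. For the first step I would write $\rho=\rho^+-\rho^-$ and use that the negative of a lower semicontinuous function is upper semicontinuous (and a finite sum of lower semicontinuous functions is lower semicontinuous): if $u_i\searrow\rho^+$ with $u_i$ lower semicontinuous and $w_i\nearrow\rho^-$ with $w_i$ upper semicontinuous, then $u_i-w_i$ is lower semicontinuous, decreases to $\rho$, dominates $\rho$, and converges to $\rho$ in $L^1(\Om)$. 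Since approximating $\rho^-$ from below by upper semicontinuous functions is the same task after a sign change, it suffices to treat $\rho\ge 0$.

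For the second step, fix $\eps>0$ and, replacing $\rho$ by a Borel representative, decompose $\rho=\sum_{n=1}^{\infty}c_n\chi_{A_n}$ with $c_n>0$ and $A_n\subset\Om$ Borel (e.g. from an increasing sequence of simple functions below $\rho$), so that $\sum_n c_n\mu(A_n)=\int_{\Om}\rho\,d\mu<\infty$. Because $\mu$ is a Borel regular outer measure and is finite on $\Om$ (as $\Om$ is bounded and balls have finite measure), it is outer regular, so for each $n$ there is an open $U_n$ with $A_n\subset U_n\subset\Om$ and $\mu(U_n\setminus A_n)<\eps\,2^{-n}(c_n+1)^{-1}$. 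Each $\chi_{U_n}$ is lower semicontinuous, hence so is $g:=\sum_{n=1}^{\infty}c_n\chi_{U_n}$, being a pointwise supremum of the lower semicontinuous partial sums. Then $g\ge\rho$ and
\[
\int_{\Om}(g-\rho)\,d\mu=\sum_{n=1}^{\infty}c_n\mu(U_n\setminus A_n)<\eps,
\]
so $g\in L^1(\Om)$ and $\|g-\rho\|_{L^1(\Om)}<\eps$.

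For the third step, apply the second step with $\eps=2^{-i}$ to get lower semicontinuous $g_i\ge\rho$ with $\|g_i-\rho\|_{L^1(\Om)}<2^{-i}$, and set $\rho_i:=\min\{g_1,\dots,g_i\}$. A minimum of finitely many lower semicontinuous functions is lower semicontinuous, the sequence $(\rho_i)$ is nonincreasing, $\rho\le\rho_i$, and $\rho\le\rho_i\le g_i$ forces $\|\rho_i-\rho\|_{L^1(\Om)}\to 0$; this is the required sequence. The argument is essentially formal: the only ingredient that is not completely routine is the outer regularity of $\mu$ invoked in the second step, which is where Borel regularity of $\mu$ together with its finiteness on the bounded set $\Om$ are used, and it is the point I would state most carefully in the metric (as opposed to Euclidean) setting.
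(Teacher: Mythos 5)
The paper itself gives no proof of this statement (it only cites \cite[p.~108]{HKSTbook}), and your argument is essentially the standard Vitali--Carath\'eodory proof found there and in Rudin: decompose a nonnegative $\rho$ as $\sum_n c_n\chi_{A_n}$, enlarge the $A_n$ to open sets by regularity of the measure, and then take finite minima to get monotonicity. Steps 2 and 3 are correct as written: since $\mu$ is a Borel regular outer measure and $\mu(\Om)<\infty$ when $\Om$ is bounded (as it is in the paper's application), the restriction of $\mu$ to Borel subsets of $\Om$ is a finite Borel measure on a metric space, hence outer regular by open sets and inner regular by closed sets, which justifies the choice of the $U_n$; and a minimum of finitely many lower semicontinuous functions is lower semicontinuous. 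Two small caveats: if one insists on an unbounded open $\Om$, outer regularity needs the extra (harmless here) observation that $\Om$ is a countable union of finite-measure pieces; and passing to a Borel representative of $\rho$ preserves the pointwise inequality $\rho\le\rho_i$ only off a null set, so one should either choose the Borel representative from above (possible by Borel regularity) or note that in the paper's only application $\rho$ is already Borel and nonnegative.

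The one step that does not work as literally stated is the reduction in Step 1. Producing upper semicontinuous $w_i\nearrow\rho^-$ from below is \emph{not} ``the same task after a sign change'' as Step 2: after the sign change you must approximate the nonpositive function $-\rho^-$ from above by lower semicontinuous functions, and your Step 2 construction applies only to nonnegative functions. The correct (and standard) repair is to decompose $\rho^-=\sum_n c_n\chi_{A_n}$, choose closed $C_n\subset A_n$ with $\mu(A_n\setminus C_n)$ small by inner regularity, and take the \emph{finite} partial sums $w_N:=\sum_{n=1}^{N}c_n\chi_{C_n}$; the truncation is essential because an infinite sum of upper semicontinuous functions need not be upper semicontinuous (this is precisely why the classical theorem produces a usc minorant that is bounded above). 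Alternatively, since $\mu(\Om)<\infty$ you can bypass usc functions entirely: apply Step 2 to $\max\{\rho,-k\}+k\ge 0$ and subtract the constant $k$, choosing $k$ so large that $\|\max\{\rho,-k\}-\rho\|_{L^1(\Om)}$ is small by dominated convergence. With either fix the proof is complete; note also that in the paper the theorem is only ever applied to a nonnegative function, so Step 1 is not needed for the paper's purposes.
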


\begin{definition}\label{def:upper gradient}
Let $H\subset X$ and $f\colon H\to Y$. We say that a Borel
function $g\colon H\to [0,\infty]$ is an upper gradient of $f$ in $H$ if
\begin{equation}\label{eq:upper gradient inequality}
d_Y(f(\gamma(0)),f(\gamma(\ell_{\gamma})))\le \int_{\gamma}g\,ds
\end{equation}
for every curve $\gamma$ in $H$.
We use the conventions $\infty-\infty=\infty$ and
$(-\infty)-(-\infty)=-\infty$.
If $g\colon H\to [0,\infty]$ is a $\mu$-measurable function
and (\ref{eq:upper gradient inequality}) holds for $1$-a.e. curve in $H$,
we say that $g$ is a $1$-weak upper gradient of $f$ in $H$.
\end{definition}

Let $H\subset X$ be $\mu$-measurable.
We say that $f\in L^1(H;Y)$ if $d_Y(f(\cdot);f(x))\in L^1(H)$ for some $x\in H$.

\begin{definition}
The Newton-Sobolev class $N^{1,1}(H;Y)$ consists of those
functions $f\in L^1(H;Y)$ for which there exists an upper gradient $g\in L^1(H)$.

The Dirichlet space $D^1(H;Y)$ consists of those functions $f\colon H\to Y$
with an upper gradient $g\in L^1(H)$.
\end{definition}

In the classical setting of $X=Y=\R^n$, both spaces equipped with the Lebesgue measure,
and assuming $f$ is  continuous, we have $f\in N^{1,1}(X;Y)$ if and only if
$f\in W^{1,1}(\R^n;\R^n)$, see e.g. \cite[Theorem A.2]{BB}.

To define the class of BV mappings,
we consider the following definitions from Martio \cite{Mar}
who studied BV functions on metric spaces.
Given a family of curves $\Gamma$, we say that a sequence of nonnegative functions $\{\rho_i\}_{i=1}^{\infty}$
is $\AM$-admissible for $\Gamma$ if
\[
\liminf_{i\to\infty}\int_{\gamma}\rho_i\,ds\ge 1\quad \textrm{for all }\gamma\in\Gamma.
\]
Then we let
\[
\AM(\Gamma):=\inf \left\{\liminf_{i\to\infty}\int_X \rho_i\,d\mu\right\},
\]
where the infimum is taken over all $\AM$-admissible sequences $\{\rho_i\}_{i=1}^{\infty}$.
Note that always $\AM(\Gamma)\le \Mod_1(\Gamma)$.

Now we define BV mappings as follows.

\begin{definition}\label{def:BV def}
Given a continuous mapping $f\colon \Om\to Y$, we say that
$f\in D^{\BV}(\Om;Y)$ if there is a sequence of
nonnegative functions $\{g_i\}_{i=1}^{\infty}$ that is bounded in $L^1(\Om)$, and
\begin{equation}\label{eq:AM ae curve}
d_Y(f(\gamma(0)),f(\gamma(\ell_{\gamma})))
\le \liminf_{i\to\infty}\int_{\gamma}g_i\,ds
\end{equation}
for $\AM$-a.e. curve $\gamma$ in $\Om$. If also $f\in L^1(\Om;Y)$, then we say that $f\in \BV(\Om;Y)$.
\end{definition}

In particular, it is enough to have \eqref{eq:AM ae curve} for $1$-a.e. curve in $\Omega$.

Durand-Cartagena--Eriksson-Bique--Korte--Shanmugalingam \cite{DEKS} show that in the case where $f$ is real-valued,
and $\mu$ is a doubling measure that supports a
Poincar\'e inequality, Definition \ref{def:BV def} agrees with Miranda's definition of BV functions
given in \cite{Mir},
which in turn agrees in Euclidean spaces with the classical definition.
Thus it is natural for us to take this as a definition.\\

\emph{Throughout this paper we assume that $(X,d,\mu)$
	and $(Y,d_Y,\nu)$ are metric spaces equipped with Borel regular outer measures $\mu$ and $\nu$,
	such that every ball has finite measure.
}

\section{Covering lemma}\label{sec:covering}

In this section we consider the following covering lemma and its consequences.
The lemma is similar to the Besicovitch covering theorem.

\begin{lemma}\label{lem:N countable collections}
Let $A\subset X$ be bounded and doubling with constant $M$,
and let $\mathcal F$ be a collection of balls $\{B(x,r_x)\}_{x\in A}$
with radius at most $R>0$.
Then there exist finite or countable subcollections $\mathcal G_1,\ldots,\mathcal G_N$,
with $N=M^4$ and $\mathcal G_j=\{B_{j,l}=B(x_{j,l},r_{j,l})\}_{l}$,
such that 
\begin{enumerate}
	\item $A\subset \bigcup_{j=1}^N\bigcup_{l}B_{j,l}$;
	\item If $j\in\{1,\ldots,N\}$ and $l\neq m$, then $x_{j,l}\in X\setminus B_{j,m}$ and $x_{j,m}\in X\setminus B_{j,l}$;
	\item If $j\in\{1,\ldots,N\}$ and $l\neq m$, then $\tfrac 12  B_{j,l}\cap \tfrac 12  B_{j,m}=\emptyset$.
\end{enumerate}
\begin{proof}
Let
\[
A_1:=\left\{x\in A\colon \tfrac 34 R< r_x\le R\right\}.
\]
Recursively, choose points $x\in A_1$ that are at distance at least $\tfrac 34 R$ from each other,
and call this (at most countable) collection $D$.
In the metric space $(A,d)$, at least $\#D$ (cardinality of $D$) balls of radius
$\tfrac 38 R$ are needed to cover $D$, but since $A$ is bounded and doubling, we know that 
finitely many such balls suffice. Thus $D$ is a finite set.
Hence the balls $\{B(x,r_x)\}_{x\in D}$ cover $A_1$.
Suppose a point $y\in A$ is contained in $N_0$ different balls $B(x,3r_x)$, $x\in D$.
Denote these centers $x_1,\ldots,x_{N_0}$.
One needs at least $N_0$ balls in $(A,d)$, of radius $\tfrac {3}{8}R$, to cover
all these centers, and so one needs
at least $N_0$ balls of radius $\tfrac {3}{8}R$ to cover $B(y,3R)\cap A$.
On the other hand, since $A$ is doubling, it is possible to find
$M^4$ balls of radius $\tfrac {3}{8}R$ that
cover $B(y,3R)\cap A$.
Thus $N_0\le M^4=N$.

Pick a maximal disjoint collection $\mathcal G_1^1$ of balls $B(x,r_x)$, $x\in D$.
Recursively, take  a maximal disjoint collection $\mathcal G_{j+1}^1$ of balls $B(x,r_x)$, $x\in D$,
that were not included in any of the collections $\mathcal G_1^{1},\ldots,\mathcal G_j^{1}$.
The collections $\mathcal G_1^1,\ldots,\mathcal G_N^1$ together contain all of the balls $B(x,r_x)$, $x\in D$;
to see this, suppose by contradiction that $B(y,r_y)$, $y\in D$, is a ball not contained in these collections.
Then necessarily $B(y,r_y)$ intersects some ball $B_j\in \mathcal G_j^1$ for every $j=1,\ldots,N$.
Now $y$ is contained in at least $N+1$ balls $B(x,3r_x)$, $x\in D$, contradicting what was proved earlier.

Supposing the collections $\mathcal G_1^n,\ldots,\mathcal G_N^n$ have been defined, define
\[
A_{n+1}:=\left\{x\in A\setminus \Bigg(\bigcup_{k=1}^n\bigcup_{j=1}^{N}\bigcup_{B\in \mathcal G^k_j}B\Bigg)\colon
\left(\frac{3}{4}\right)^{n+1}R< r_x\le \left(\frac{3}{4}\right)^{n}R\right\}.
\]
Then as above, we can extract collections $\mathcal G_1^{n+1},\ldots,\mathcal G_N^{n+1}$
of disjoint balls covering $A_{n+1}$.
Finally define $\mathcal G_j:=\bigcup_{n=1}^{\infty}\mathcal G_j^{n}$, $j=1,\ldots,N$.
We can index each $\mathcal G_j$ as $\{B_{j,l}\}_{l}$.
Clearly (1) is satisfied.
If $j\in\{1,\ldots,N\}$ and $l\neq m$, consider the two balls $B_{j,l}=B(x_{j,l},r_{j,l})$ and $B_{j,m}=B(x_{j,m},r_{j,m})$.
We have $B_{j,l}\in \mathcal G_j^{n_1}$ and $B_{j,m}\in \mathcal G_j^{n_2}$ for some $n_1,n_2\in\N$.
If $n_1=n_2$, then the balls $B_{j,l}$ and $B_{j,m}$ are disjoint and so obviously (2) and (3) hold.
Assume that $n_1<n_2$, the opposite inequality being analogous.
Then by construction $x_{j,m}\notin B_{j,l}$. On the other hand,
$r_{j,m}\le r_{j,l}$, so then also $x_{j,l}\notin B_{j,m}$.
The fact that $x_{j,m}\notin B_{j,l}$ implies that
$B(x_{j,l},\tfrac 12 r_{j,l})$ and $B(x_{j,m},\tfrac 12 r_{j,l})$ are disjoint, and so also
$B(x_{j,l},\tfrac 12 r_{j,l})$ and $B(x_{j,m},\tfrac 12 r_{j,m})$ are disjoint.
\end{proof}
\end{lemma}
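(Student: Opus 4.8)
The plan is to carry out the classical greedy construction behind the Besicovitch covering theorem, replacing the Euclidean geometry by the metric doubling of $A$. First I would sort the balls by scale: with geometric ratio $3/4$, put $A_n:=\{x\in A\colon (3/4)^nR<r_x\le (3/4)^{n-1}R\}$ and treat the sets $A_1,A_2,\dots$ in order, at stage $n$ only considering those points of $A_n$ not already covered by a ball selected at an earlier (hence larger) scale. Inside the relevant part of $A_n$ I would recursively pick points that are pairwise at distance at least $(3/4)^nR$, obtaining a set $D_n$; boundedness of $A$ together with doubling makes $D_n$ finite, and maximality of a separated set ensures that $\{B(x,r_x)\}_{x\in D_n}$ covers the part of $A_n$ under consideration, since each remaining point lies within $(3/4)^nR<r_z$ of some $z\in D_n$.

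The technical core is a bounded-overlap estimate at each fixed scale: every $y\in A$ lies in at most $N:=M^4$ of the enlarged balls $B(x,3r_x)$ with $x\in D_n$. Indeed such centers all sit in a ball about $y$ whose radius is comparable to the scale, they are $(3/4)^nR$-separated, so covering them needs at least one ball of radius $\approx (3/4)^nR$ per center, whereas iterating the doubling property boundedly many times covers that fixed ball by $M^4$ balls of that size. With this in hand I would split $D_n$ greedily into $N$ subfamilies $\mathcal G_1^n,\dots,\mathcal G_N^n$ of pairwise disjoint balls: take a maximal disjoint subfamily, delete it, repeat. If some ball $B(y,r_y)$ with $y\in D_n$ survived all $N$ rounds it would meet one ball from each $\mathcal G_j^n$; since at scale $n$ all radii are comparable, each such intersection forces $y\in B(x,3r_x)$ for the corresponding center $x$, which together with $y\in B(y,3r_y)$ produces $N+1$ enlarged balls through $y$, contradicting the overlap bound.

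Finally I would set $\mathcal G_j:=\bigcup_{n\ge 1}\mathcal G_j^n$ and index it as $\{B_{j,l}\}_l$. Conclusion (1) is immediate because every $x\in A$ has $r_x\le R$ and so is covered at the stage matching its scale, if not earlier. For (2), two balls $B_{j,l},B_{j,m}$ from the same $\mathcal G_j^n$ are disjoint by construction, while if they come from scales $n_1<n_2$ then $x_{j,m}$ was uncovered at stage $n_2$, hence $x_{j,m}\notin B_{j,l}$, and the scales force $r_{j,m}<r_{j,l}$, from which $x_{j,l}\notin B_{j,m}$ follows; thus $d(x_{j,l},x_{j,m})\ge r_{j,l}\ge r_{j,m}$. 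Conclusion (3) is then immediate, since this separation of centers keeps $\tfrac12 B_{j,l}$ and $\tfrac12 B_{j,m}$ disjoint.

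I expect the bounded-overlap estimate to be the main obstacle: the scale ratio $3/4$, the separation radius, and the dilation factor $3$ must be chosen compatibly so that a fixed number of iterations of the doubling property yields the overlap bound $M^4$ and, simultaneously, the same dilation makes the ``leftover ball'' argument close at every scale. Once the single-scale estimate is established, the cross-scale bookkeeping needed for (1)--(3) is routine.
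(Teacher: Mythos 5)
Your proposal is correct and follows essentially the same route as the paper's proof: the same geometric scale decomposition with ratio $3/4$, the same maximal $(3/4)^nR$-separated nets made finite by boundedness plus doubling, the same bounded-overlap count for the dilated balls $B(x,3r_x)$ yielding the constant $N=M^4$, the same greedy splitting into $N$ disjoint subfamilies via the ``leftover ball meets all $N$ families'' contradiction, and the same cross-scale argument (smaller-scale centers are uncovered, hence $d(x_{j,l},x_{j,m})\ge r_{j,l}\ge r_{j,m}$) for conclusions (2) and (3). The only differences are an index shift in the scale decomposition and a slightly looser phrasing of the covering-number estimate, neither of which affects correctness.
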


Now we show how condition (2) of Lemma \ref{lem:N countable collections} can be used
with quasiconformal mappings.
Recall the definition of $L_f(x,r)$ from the beginning of the introduction.

\begin{lemma}\label{lem:disjoint images}
Consider a collection of balls $\{B_l=B(x_l,r_l)\}_{l=1}^{\infty}$
such that for all $l\neq m$ we have $x_l\notin B_m$
and $x_m\notin B_l$. Also consider an injection $f\colon X\to Y$ such that for all $l\in\N$ there is
$H_l\ge 1$ such that 
\[
B\left(f(x_l),\frac{L_f(x_l,r_l)}{H_l}\right)\subset f(B_l)\quad\textrm{for all }l\in\N.
\]
Then
\[
B\left(f(x_l),\frac{L_f(x_l,r_l)}{2H_l}\right)\cap B\left(f(x_m),\frac{L_f(x_m,r_m)}{2H_m}\right)=\emptyset
\quad\textrm{for all }l\neq m.
\]
\end{lemma}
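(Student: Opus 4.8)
The plan is to argue by contradiction using only the two hypotheses: the separation of centers and the inclusion of the ``small'' images $B(f(x_l), L_f(x_l,r_l)/H_l)\subset f(B_l)$. The key observation is that $f$ is injective, so the images $f(B_l)$ and $f(B_m)$ of disjoint sets are disjoint; the only issue is that $B_l$ and $B_m$ need not be disjoint, only their centers are separated. So I would first exploit the center-separation condition metrically: if $z$ lies in both $B(f(x_l), L_f(x_l,r_l)/(2H_l))$ and $B(f(x_m), L_f(x_m,r_m)/(2H_m))$, then by the triangle inequality
\[
d_Y(f(x_l),f(x_m))\le \frac{L_f(x_l,r_l)}{2H_l}+\frac{L_f(x_m,r_m)}{2H_m}.
\]
Without loss of generality assume $L_f(x_l,r_l)/H_l\le L_f(x_m,r_m)/H_m$ (swapping the roles of $l$ and $m$ otherwise), so the right-hand side is at most $L_f(x_m,r_m)/H_m$. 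Hence $f(x_l)$ lies in the \emph{closed} ball of radius $L_f(x_m,r_m)/H_m$ about $f(x_m)$; a small care is needed here to turn the strict inclusion of open balls into a statement I can feed back in, but since $f(x_l)\in B(f(x_m),\,L_f(x_m,r_m)/(2H_m)+L_f(x_m,r_m)/(2H_m)) $ only gives the closed ball, I would instead keep one of the two radii strictly smaller: from $z\in B(f(x_l),L_f(x_l,r_l)/(2H_l))$ we get $d_Y(f(x_l),z)<L_f(x_l,r_l)/(2H_l)\le L_f(x_m,r_m)/(2H_m)$, and $d_Y(f(x_m),z)<L_f(x_m,r_m)/(2H_m)$, so $d_Y(f(x_l),f(x_m))<L_f(x_m,r_m)/H_m$, i.e. $f(x_l)\in B(f(x_m),L_f(x_m,r_m)/H_m)\subset f(B_m)$ strictly.

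By injectivity of $f$ this forces $x_l\in B_m$, contradicting the hypothesis $x_l\notin B_m$. That is the whole argument, so the proof is essentially a triangle-inequality bookkeeping step followed by one application of injectivity. The one place that requires genuine attention — the ``main obstacle'' in a proof that is otherwise routine — is the direction of the inequality and keeping one estimate strict: one must choose which of $l,m$ plays which role so that the sum of the two half-radii collapses to a single (strictly smaller) radius, and one must make sure the resulting membership lands inside the \emph{open} ball $B(f(x_m),L_f(x_m,r_m)/H_m)$ rather than merely its closure, since the hypothesis $B(f(x_m),L_f(x_m,r_m)/H_m)\subset f(B_m)$ is stated for open balls and injectivity then transfers an \emph{open}-ball membership of $f(x_l)$ to the membership $x_l\in B_m$. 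As long as the sign choice is made via $\min\{L_f(x_l,r_l)/H_l,\,L_f(x_m,r_m)/H_m\}$ and the strictness is carried from the $z$-balls, both points are handled cleanly.

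I should also note the degenerate case $L_f(x_m,r_m)=0$: then $f$ is constant on $\overline{B}(x_m,r_m)$, so $f(B_m)$ is a single point and the ``ball'' $B(f(x_m),L_f(x_m,r_m)/(2H_m))=B(f(x_m),0)=\emptyset$, making the claimed intersection trivially empty; similarly if $L_f(x_l,r_l)=0$. Thus one may assume both quantities positive without loss of generality, and the argument above applies verbatim. No appeal to doubling, to the covering lemma, or to any measure-theoretic property is needed — only injectivity of $f$, the hypothesized inclusions, and the triangle inequality in $Y$.
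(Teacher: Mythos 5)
Your argument is correct and is essentially the paper's own proof run in the contrapositive: both hinge on the fact that injectivity plus $x_l\notin B_m$ forces $f(x_l)\notin f(B_m)\supset B\bigl(f(x_m),L_f(x_m,r_m)/H_m\bigr)$, followed by the triangle inequality. The only cosmetic difference is that the paper uses both exclusions symmetrically to get $d_Y(f(x_l),f(x_m))\ge \tfrac{L_f(x_l,r_l)}{2H_l}+\tfrac{L_f(x_m,r_m)}{2H_m}$ directly, whereas you take a WLOG minimum and use one exclusion; your attention to strictness and to the degenerate case $L_f=0$ is fine but not needed for the paper's phrasing.
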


\begin{proof}
We have $f(x_l)\notin f(B_m)$ and so $f(x_l)\notin B\left(f(x_m),\tfrac{1}{H_m}L_f(x_m,r_m)\right)$.
Analogously, $f(x_m)\notin B\left(f(x_l),\tfrac{1}{H_l}L_f(x_l,r_l)\right)$. Thus
\[
d_Y(f(x_l),f(x_m))\ge \frac{L_f(x_l,r_l)}{2H_l}+\frac{L_f(x_m,r_m)}{2H_m},
\]
and so the conclusion follows.
\end{proof}

\begin{remark}
The previous two lemmas are analogous to Lemma 2.2 and Lemma 2.3 of \cite{BKR}.
However, in the conclusion of Lemma \ref{lem:disjoint images}
we are able to divide by a number comparable to $H_{l}$ (or $H_m$), namely $2H_{l}$,
whereas in \cite[Lemma 2.3]{BKR} it was necessary to divide by a number comparable to $H_l^2$.
In the proof of Theorem \ref{thm:main theorem} we consider large values of $H_l$, and there
this difference becomes crucial.
\end{remark}

\section{Proof of the main theorem}\label{sec:main theorem}

In this section we prove the main theorem, Theorem \ref{thm:main theorem intro}.

We give it in the following more general form.
In particular, note that we consider two measures $\mu$ and $\widetilde{\mu}$
on the space $X$, which may appear odd at first but turns out to be useful
when proving Corollary \ref{cor:weights}.
The idea is simply that $\mu$ might not be asymptotically doubling at \emph{every}
point $x\in \Om\setminus E$, but often a small modification of $\mu$
gives another measure $\widetilde{\mu}$ that satisfies \eqref{eq:two measure doubling}.
The notation $\widetilde{\mu}\ll \mu$ means that $\widetilde{\mu}$ is absolutely continuous
with respect to $\mu$.

By a slight abuse of notation, we denote also the image of a curve $\gamma$ by the same symbol.
Moreover, recall that it is possible to have $h_f<1$ at some points, but we define
$h_f^{\vee}:=\max\{h_f,1\}$ and similarly $H_f^{\vee}(x,r):=\max\{H_f(x,r),1\}$.

\begin{theorem}\label{thm:main theorem}
	Let $\Om\subset X$ be open and bounded,
	let $f\colon \Om\to Y$ be injective and continuous with $\nu(f(\Om))<\infty$,
	and:
	\begin{itemize}
		\item[(1)] Suppose that $(\Om,d)$ is doubling,
		and that there exists another Borel regular outer measure
		$\mu\le \widetilde{\mu}\ll \mu$ on $\Om$ and a $\widetilde{\mu}$-measurable set $E\subset\Om$ such that
		for some constant $C_{\Om}>1$,
		\begin{equation}\label{eq:two measure doubling}
		\limsup_{r\to 0}\frac{\mu(B(x,20r))}{\widetilde{\mu}(B(x,r))}< C_{\Om}\quad\textrm{for every }x\in \Om\setminus E,
		\end{equation}
		and that in
		$\Om\setminus E$ there exist functions $Q(\cdot)\ge 1$ and $R(\cdot)>0$
		that are $\widetilde{\mu}$-measurable and $Q(f^{-1}(\cdot))$ is $\nu$-measurable,
		such that
		\begin{equation}\label{eq:limsup liminf condition}
		\limsup_{r\to 0}\frac{\widetilde{\mu}(B(x,r))}{r^{Q(x)}}<R(x)\liminf_{r\to 0}\frac{\nu(B(f(x),r))}{r^{Q(x)}}
		\quad \textrm{for every }x\in \Om\setminus E.
		\end{equation}
		
		\item[(2)] Suppose also that the exceptional set $E$ satisfies
		\begin{equation}\label{eq:E assumption}
		\Mod_1(\{\gamma\subset \Om\colon \mathcal H^1(f(\gamma\cap E))>0\})=0
		\end{equation}
		and $h_f<\infty$ in $\Om\setminus E$.
		
		\item[(3)] Finally assume that
		\[
		\begin{cases}
		\frac{Q(\cdot)-1}{Q(\cdot)}(R(\cdot)h_f^{\vee}(\cdot)^{Q(\cdot)})^{1/(Q(\cdot)-1)}
		\in L^1(\{Q>1\}\setminus E,\widetilde{\mu});\\
		h_f^{\vee}(x) R(x) \le H<\infty\quad\textrm{for all }x\in \{Q=1\}\setminus E.
		\end{cases}
		\]
	\end{itemize}
	Then $f\in D^{\BV}(\Om;Y)$ and $f\in D^{1}(\{Q>1\}\cup E;Y)$.
\end{theorem}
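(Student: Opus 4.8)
The plan is to build, for each scale $1/m$, a family of ``almost upper gradients'' $\rho_m$ for $f$; to show that $\{\rho_m\}$ is bounded in $L^1(\Om,\mu)$ and satisfies the $\AM$-a.e.\ inequality \eqref{eq:AM ae curve} (which already yields $f\in D^{\BV}(\Om;Y)$); and then, using the Dunford--Pettis theorem, Mazur's lemma and Fuglede's lemma (Theorem \ref{thm:dunford-pettis}, Lemma \ref{lem:Mazur lemma}, Lemma \ref{lem:Fuglede lemma}) via an equi-integrability argument, to upgrade $\{\rho_m\}$ (restricted to $\{Q>1\}\cup E$) to a genuine $1$-weak upper gradient of $f$ in $L^1$ there, giving $f\in D^1(\{Q>1\}\cup E;Y)$. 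The starting reduction is that by \eqref{eq:E assumption}, for $1$-a.e.\ (hence $\AM$-a.e.) curve $\gamma$ in $\Om$ one has $\mathcal H^1(f(\gamma\cap E))=0$, so that, $f(\gamma)$ being connected,
\[
d_Y\bigl(f(\gamma(0)),f(\gamma(\ell_\gamma))\bigr)\le \mathcal H^1(f(\gamma))=\mathcal H^1(f(\gamma\setminus E));
\]
thus it suffices to cover $\gamma\setminus E\subset\Om\setminus E$ by controlled balls and to estimate the Hausdorff content of the image.

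\emph{Construction.} Put $\phi:=\frac{Q(\cdot)-1}{Q(\cdot)}\bigl(R(\cdot)\,h_f^{\vee}(\cdot)^{Q(\cdot)}\bigr)^{1/(Q(\cdot)-1)}$ on $\{Q>1\}\setminus E$ and $\phi:=0$ elsewhere; by Condition (3), $\phi\in L^1(\Om,\widetilde\mu)$, and by Theorem \ref{thm:VitaliCar} we fix a lower semicontinuous $\psi\ge\phi$ with $\int_\Om(\psi-\phi)\,d\widetilde\mu$ as small as we wish. For each $x\in\Om\setminus E$ we use $h_f(x)<\infty$, the \emph{strict} inequality in \eqref{eq:limsup liminf condition}, the facts that $l_f(x,r)\to0$ and $L_f(x,r)\to0$ as $r\to0$, the bound \eqref{eq:two measure doubling}, and lower semicontinuity of $\psi$ at $x$, to pick $r_x\in(0,1/m)$ so small that all of the following hold: $H_f^{\vee}(x,r_x)^{Q(x)/(Q(x)-1)}\le 2h_f^{\vee}(x)^{Q(x)/(Q(x)-1)}$ (respectively $H_f^{\vee}(x,r_x)\le 2h_f^{\vee}(x)$ if $Q(x)=1$); $\mu(B(x,20r_x))\le C_{\Om}\widetilde\mu(B(x,r_x))$; $\phi(x)\widetilde\mu(B(x,r_x))\le\int_{B(x,r_x)}\psi\,d\widetilde\mu$; and
\[
\widetilde\mu(B(x,r_x))\le R(x)\,r_x^{Q(x)}\,\frac{\nu\bigl(B(f(x),\tfrac12 l_f(x,r_x))\bigr)}{\bigl(\tfrac12 l_f(x,r_x)\bigr)^{Q(x)}} .
\]
(We also require, harmlessly, that these persist at the comparable scales used below.) Applying Lemma \ref{lem:N countable collections} to a suitably shrunken version of $\{B(x,r_x)\}_{x\in\Om\setminus E}$ — so that a curve meeting one of the output balls crosses it substantially — gives subcollections $\mathcal G_1^m,\dots,\mathcal G_N^m$, $N=M^4$, $\mathcal G_j^m=\{B_{j,l}^m=B(x_{j,l}^m,r_{j,l}^m)\}_l$, and we set $\rho_m:=\sum_{j=1}^N\sum_l\frac{2L_f(x_{j,l}^m,r_{j,l}^m)}{r_{j,l}^m}\chi_{2B_{j,l}^m}$; running the same construction over $\{B(x,r_x)\}_{x\in\{Q>1\}}$ gives a variant $\rho_m^{>1}$ whose balls are centered in $\{Q>1\}$.

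\emph{The space $D^{\BV}$.} Let $\gamma$ be a curve with $\mathcal H^1(f(\gamma\cap E))=0$ and $m$ so large that $\gamma$ lies in no ball $2B_{j,l}^m$. Whenever $\gamma\cap B_{j,l}^m\ne\emptyset$, $\gamma$ contains a subarc of length $\ge r_{j,l}^m$ inside $2B_{j,l}^m$, so $\int_\gamma\rho_m\,ds\ge\sum_{\gamma\cap B_{j,l}^m\ne\emptyset}2L_f(x_{j,l}^m,r_{j,l}^m)$; since the sets $f(B_{j,l}^m)$ with $\gamma\cap B_{j,l}^m\ne\emptyset$ cover $f(\gamma)$ up to an $\mathcal H^1$-null set and have diameters $\le2L_f(x_{j,l}^m,r_{j,l}^m)$ (whose supremum $\delta_m\to0$), the right-hand side is $\ge\mathcal H^1_{\delta_m}(f(\gamma))$; letting $m\to\infty$ gives $d_Y(f(\gamma(0)),f(\gamma(\ell_\gamma)))\le\mathcal H^1(f(\gamma))\le\liminf_m\int_\gamma\rho_m\,ds$, which is \eqref{eq:AM ae curve}. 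For the uniform $L^1$-bound, \eqref{eq:two measure doubling} gives $\|\rho_m\|_{L^1(\Om,\mu)}\le2C_{\Om}\sum_{j,l}\frac{L_f(x_{j,l}^m,r_{j,l}^m)}{r_{j,l}^m}\widetilde\mu(B_{j,l}^m)$; for a single ball $B=B(x,r_x)$, inserting the displayed measure bound, writing $L_f=H_f\,l_f$, using the bound on $H_f^{\vee}(x,r_x)$, and applying Young's inequality with exponents $Q(x)/(Q(x)-1)$ and $Q(x)$ and a free parameter $\kappa\ge1$ yields, when $Q(x)>1$,
\[
\frac{L_f(x,r_x)}{r_x}\,\widetilde\mu(B)\ \le\ 4\kappa\,\phi(x)\,\widetilde\mu(B)\ +\ 2\kappa^{-(Q(x)-1)}\,\nu\Bigl(B\bigl(f(x),\tfrac{L_f(x,r_x)}{2H_f(x,r_x)}\bigr)\Bigr),
\]
and, when $Q(x)=1$, the bound $\frac{L_f(x,r_x)}{r_x}\widetilde\mu(B)\le 4H\,\nu\bigl(B(f(x),\tfrac{L_f(x,r_x)}{2H_f(x,r_x)})\bigr)$ via $R(x)h_f^{\vee}(x)\le H$. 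Now sum over $j,l$: by Lemma \ref{lem:disjoint images} the balls $B\bigl(f(x_{j,l}^m),\tfrac{L_f}{2H_f}\bigr)$ are pairwise disjoint subsets of $f(\Om)$ inside each $\mathcal G_j^m$, so the $\nu$-terms contribute at most $N\nu(f(\Om))<\infty$; by the choice of $r_x$ and the bounded overlap of $\{B_{j,l}^m\}_l$ inside each $\mathcal G_j^m$, the $\phi$-terms contribute at most $C(M)\kappa\int_\Om\psi\,d\widetilde\mu<\infty$. Taking $\kappa=1$, $\sup_m\|\rho_m\|_{L^1(\Om,\mu)}<\infty$, and together with \eqref{eq:AM ae curve} this gives $f\in D^{\BV}(\Om;Y)$.

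\emph{The space $D^1$, and the main obstacle.} The decisive step is to show $\{\rho_m^{>1}\}$ equi-integrable (w.r.t.\ $\widetilde\mu$, which transfers to $\mu$ since $\mu\le\widetilde\mu$). This cannot come from domination, as $L_f(x,r)/r$ need not lie below any fixed $L^1$-function ($f$ need not be locally Lipschitz); instead one balances the two terms above. Given $\eps>0$: pick $\eta>0$, then $\kappa$ so large that $2\kappa^{-\eta}N\nu(f(\Om))<\eps/2$; on the part of $\{Q>1\}$ where $Q<1+\eta$, where $\kappa^{-(Q-1)}$ no longer helps, one uses instead that $\phi\in L^1(\widetilde\mu)$ forces that region (where its integrand is large) to carry little $\widetilde\mu$-mass, absorbing it into the $\phi$-term. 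Next, for $A\subset\{Q>1\}\cup E$ with $\widetilde\mu(A)<\delta$, pick (outer regularity) open $U\supset A$ with $\widetilde\mu(U)<2\delta$ and $\tau:=\dist(A,X\setminus U)>0$: a ball $2B_{j,l}^m$ meeting $A$ but not contained in $U$ has radius $\ge\tau/4$, hence is absent once $m>4/\tau$, while for the finitely many $m\le4/\tau$ the bound $\int_A\rho_m^{>1}\,d\widetilde\mu<\eps$ follows from absolute continuity of the integral once $\delta$ is small; for $m>4/\tau$ the surviving balls lie in $U$, so the $\phi$-part of $\int_A\rho_m^{>1}\,d\widetilde\mu$ is $\le C(M)\kappa\int_U\psi\,d\widetilde\mu<\eps/4$ for $\delta$ small, and the $\nu$-part is $<\eps/2$ by the choice of $\kappa$; the tail condition in Definition \ref{def:equiintegrability} follows from $\psi\in L^1(\widetilde\mu)$, $\nu(f(\Om))<\infty$ and the same estimate. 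Theorem \ref{thm:dunford-pettis} then gives a subsequence $\rho_{m_k}^{>1}\rightharpoonup\rho$ weakly in $L^1(\{Q>1\}\cup E,\widetilde\mu)$; Lemma \ref{lem:Mazur lemma} gives convex combinations $\widehat\rho_k\to\rho$ in $L^1(\widetilde\mu)$, hence in $L^1(\mu)$; Lemma \ref{lem:Fuglede lemma} gives $\int_\gamma\widehat\rho_k\,ds\to\int_\gamma\rho\,ds$ for $1$-a.e.\ curve $\gamma$ in $\{Q>1\}\cup E$. Re-running the admissibility estimate of the previous paragraph with $\widehat\rho_k$ in place of $\rho_m$ (it is monotone and survives convex combinations) yields $d_Y(f(\gamma(0)),f(\gamma(\ell_\gamma)))\le\int_\gamma\rho\,ds$ for $1$-a.e.\ such $\gamma$; since $\rho\in L^1(\{Q>1\}\cup E,\mu)$, $\rho$ is a $1$-weak upper gradient of $f$ there, i.e.\ $f\in D^1(\{Q>1\}\cup E;Y)$. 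As indicated, the equi-integrability is the crux — it forces the two-parameter balancing, the separation-of-scales observation on balls straddling $A$ and $U$, and the separate handling near $\{Q=1\}$.
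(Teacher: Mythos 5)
Your construction and your argument for $f\in D^{\BV}(\Om;Y)$ follow the paper's route closely and are essentially sound, apart from one slip: the balls $B_{j,l}^m$ within a single subcollection $\mathcal G_j^m$ do \emph{not} have bounded overlap (only the balls of a fixed generation are disjoint, and a point can lie near the boundary of one ball from each generation), so to sum the $\phi$-terms you must use the pairwise disjoint half-balls $\tfrac12 B_{j,l}^m$ from property (3) of Lemma \ref{lem:N countable collections}, together with lower semicontinuity of $\psi$ at scale $r_x/2$, as the paper does; this is cosmetic. The genuine gaps are both in the equi-integrability argument, which you rightly identify as the crux. First, your verification of the second condition of Definition \ref{def:equiintegrability} hinges on $\tau:=\dist(A,X\setminus U)>0$ for an open $U\supset A$ of comparable measure; this fails for a general $\mu$-measurable $A$ (take $A$ dense in $\Om$ with small measure: every open $U\supset A$ is dense, and $\dist(A,X\setminus U)=0$ whenever $X\setminus U$ meets $\overline{A}$). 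Even when $\tau>0$, your treatment of the ``finitely many $m\le 4/\tau$'' by absolute continuity is circular: $\delta$ must be fixed before $A$ is given, while the finite range of $m$ depends on $A$ through $\tau$. The paper's proof avoids this entirely by arguing by contradiction with sets $H_i$, splitting the ball indices according to whether $\widetilde{\mu}(2B\cap H_i)/\mu(10B)\le 1/M$, and invoking the $5$-covering lemma to show that the remaining balls have total measure at most $MC_{\Om}\widetilde{\mu}(H_i)\to0$, so that the $h$-integral over their union vanishes by absolute continuity; no metric separation between $A$ and the complement of a neighborhood is ever needed.

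Second, your treatment of the region $\{1<Q<1+\eta\}$, where $\kappa^{-(Q-1)}$ gives no gain, is not justified: $\phi\in L^1(\widetilde{\mu})$ does not force that region to carry little $\widetilde{\mu}$-mass, because $\phi$ need not be large there (if $R(x)h_f^{\vee}(x)^{Q(x)}\le 1$ then $(R(x)h_f^{\vee}(x)^{Q(x)})^{1/(Q(x)-1)}\le 1$ and the prefactor $(Q(x)-1)/Q(x)$ is small), and in any case small $\widetilde{\mu}$-mass of the region would not control $\sum_l \frac{L_f}{r}\widetilde{\mu}(2B\cap A)$ for balls centered there --- that is precisely the difficulty equi-integrability is meant to address. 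What actually makes this region negligible in the paper is the stratification $A_k=\{1+1/k\le Q<1+1/(k-1)\}$ together with open sets $W_k\supset f(A_k)$ with $\nu(W_k)<\nu(f(A_k))+2^{-k}\delta$: since $f$ is injective and $\nu(f(\Om))<\infty$, the series $\sum_k\nu(f(A_k))$ converges, so the $\nu$-contribution of balls centered in $\bigcup_{k\ge K}A_k$ (whose disjoint image balls sit inside $W_k$) is uniformly small for large $K$, as is $\sum_{k\ge K}\int_{A_k}h\,d\widetilde{\mu}$. Your construction omits the $A_k$/$W_k$ bookkeeping, so you have no way to localize the $\nu$-mass of the image balls, and the near-$\{Q=1\}$ part of the sum is left uncontrolled. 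Both gaps are repairable, but only by importing the paper's mechanism.
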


\begin{proof}
	We divide the proof into two parts.\\
	
	\textbf{Part 1.} Here we show that $f\in D^{\BV}(\Om;Y)$.\\
	
	Fix $0<\delta<1$.
	By the Vitali-Carath\'eodory theorem (Theorem \ref{thm:VitaliCar}),
	we find a lower semicontinuous function $h\ge 0$ on $\Om$ with
	\[
	h\ge \frac{Q(\cdot)-1}{Q(\cdot)}(R(\cdot)h_f^{\vee}(\cdot)^{Q(\cdot)})^{1/(Q(\cdot)-1)}\ch_{\{Q>1\}\setminus E}
	\]
	and
	\[
	\int_{\Om}h\,d\widetilde{\mu}<\int_{\{Q>1\}\setminus E}\frac{Q(\cdot)-1}{Q(\cdot)}
	(R(\cdot)h_f^{\vee}(\cdot)^{Q(\cdot)})^{1/(Q(\cdot)-1)}\,d\widetilde{\mu}+\delta.
	\]
	Define the sets
	\[
	A_1:=\{x\in \Om\colon Q(x)\ge 2\}
	\]
	and
	\begin{equation}\label{eq:def of Ak}
	A_k:=\{x\in \Om\colon 1+1/k\le Q(x)< 1+1/(k-1)\},\quad k=2,3\,\ldots,
	\end{equation}
	and finally $A_{0}:=\{x\in \Om\colon Q(x)=1\}$.
	We have $\bigcup_{k=0}^{\infty}A_k=\Om\setminus E$.
	Since $\widetilde{\mu}$ and $\nu$ are Borel regular outer measures,
	we find open sets $U_k$ with $A_k\subset U_k\subset \Om$
	and open sets $W_k\supset f(A_k)$ such that
	\begin{equation}\label{eq:choice of Uk}
	\int_{U_k}h\,d\widetilde{\mu}< \int_{A_k}h\,d\widetilde{\mu}+
	2^{-k}\delta
	\end{equation}
	and
	\begin{equation}\label{eq:choice of Wk}
	\nu(W_k)< \nu(f(A_k))+2^{-k}\delta.
	\end{equation}
	For every $x\in A_k$ we can choose a radius $0<r_x<\delta$ sufficiently small so that
	\begin{itemize}
		\item $B(x,r_x)\subset U_k$ and
		$B(f(x),L_f(x,r_x))\subset W_k$ (since $f$ is continuous);
		\item by \eqref{eq:two measure doubling},
		\begin{equation}\label{eq:doubling at rx}
		\frac{\mu(B(x,10r))}{\widetilde{\mu}(B(x,\tfrac 12 r))}< C_{\Om};
		\end{equation}
		\item by \eqref{eq:limsup liminf condition}, for some $C(x)>0$,
		\begin{equation}\label{eq:sup and inf}
		\sup_{0<r\le r_x}\frac{\widetilde{\mu}(B(x,r))}{r^{Q(x)}}< R(x)C(x)
		\quad \textrm{and}\quad \inf_{0<r\le L_f(x,r_x)}\frac{\nu(B(f(x),r))}{r^{Q(x)}}>C(x);
		\end{equation}
		\item if $Q(x)>1$ (i.e. if $k\ge 1$), by lower semicontinuity
		$h(y)\ge h(x)/2\ge 0$ for all $y\in B(x,r_x/2)$, and so
		\[
		h(x)\le 2\vint{B(x,r_x/2)}h(y)\,d\widetilde{\mu}(y),
		\]
		whence 
		\begin{equation}\label{eq:Lebesgue point inequality}
		\frac{Q(x)-1}{Q(x)}(R(x)h_f^{\vee}(x)^{Q(x)})^{1/(Q(x)-1)}\le 2\vint{B(x,r_x/2)}h(y)\,d\widetilde{\mu}(y).
		\end{equation}
	\end{itemize}
	Finally since $h_f^{\vee}(x)=\liminf_{r\to 0} H_f^{\vee}(x,r)$,
	we can also choose $r_x$ suitably to have
	\begin{equation}\label{eq:Hf and hf}
	H_f^{\vee}(x,r_x)\le 2h_f^{\vee}(x)
	\end{equation}
	(here we use the fact that $h_f^{\vee}\ge 1>0$).
	For each $k=0,1,\ldots$,
	from this covering $\mathcal G_k:=\{B(x,r_x)\}_{x\in A_k}$,
	by Lemma \ref{lem:N countable collections} we can extract subcoverings
	$\mathcal G_{k,1},\ldots,\mathcal G_{k,N}$,
	\[
	\mathcal G_{k,j}=\{B_{k,j,l}=B(x_{k,j,l},r_{k,j,l})\}_{l},
	\]
	having the good properties given in the Lemma.
	Define
	\[
	\rho:=2\sum_{k=0}^{\infty}\sum_{j=1}^N\sum_{l}
	\frac{L_f(x_{k,j,l},r_{k,j,l})}{r_{k,j,l}}\ch_{2B_{k,j,l}}.
	\]
	Consider a curve $\gamma$ in $\Om$ with $\diam\gamma \ge\delta$.
	If $\gamma$ intersects $B_{k,j,l}$, then $\mathcal H^1(\gamma\cap 2B_{k,j,l})\ge r_{k,j,l}$.
	Thus for $1$-a.e. curve $\gamma$ in $\Om$, we have
	\begin{equation}\label{eq:upper gradient property proved}
	\int_{\gamma}\rho\,ds\ge 2\sum_{B_{k,j,l}\cap \gamma\neq\emptyset}L_f(x_{k,j,l},r_{k,j,l})
	\ge  \sum_{B_{k,j,l}\cap \gamma\neq\emptyset}\diam (f(B_{k,j,l}))
	\ge d_Y(f(\gamma(0)),f(\gamma(\ell_{\gamma}))),
	\end{equation}
	where the last inequality holds since the balls $B_{k,j,l}$ satisfying $B_{k,j,l}\cap \gamma\neq\emptyset$
	cover $\gamma\setminus E$ and so the sets $f(B_{k,j,l})$ cover
	$\mathcal H^1$-almost all of $f(\gamma)$
	by \eqref{eq:E assumption}.
	We also define
	\begin{equation}\label{eq:definition of g}
	g:=2\sum_{k=1}^{\infty}\sum_{j=1}^N\sum_{l}\frac{L_f(x_{k,j,l},r_{k,j,l})}{r_{k,j,l}}\ch_{2B_{k,j,l}};
	\end{equation}
	that is, the same as $\rho$ but with the summation starting from $k=1$ instead of $k=0$.
	For $1$-a.e. curve $\gamma$ in $\{Q>1\}\cup E$ with $\diam\gamma \ge \delta$, we similarly obtain
	\begin{equation}\label{eq:upper gradient property proved 2}
	\int_{\gamma}g\,ds\ge d_Y(f(\gamma(0)),f(\gamma(\ell_{\gamma}))).
	\end{equation}
	Note that for each ball $B_{k,j,l}$, we have
	\[
	B\left(f(x_{k,j,l}),\frac{L_f(x_{k,j,l},r_{k,j,l})}{H_f^{\vee}(x_{k,j,l},r_{k,j,l})}\right)
	\subset B\left(f(x_{k,j,l}),l_f(x_{k,j,l},r_{k,j,l})\right)\\
	\subset f(B_{k,j,l}).
	\]
	Now, for any fixed $k\in\N$ and $j\in\{1,\ldots,N\}$, Lemma \ref{lem:disjoint images} gives
	\begin{equation}\label{eq:disjoint images}
	B\left(f(x_{k,j,l}),\frac{L_f(x_{k,j,l},r_{k,j,l})}{2H_f^{\vee}(x_{k,j,l},r_{k,j,l})}\right)\cap B\left(f(x_{k,j,m}),
	\frac{L_f(x_{k,j,m},r_{k,j,m})}{2H_f^{\vee}(x_{k,j,m},r_{k,j,m})}\right)
	=\emptyset
	\quad\textrm{for all }l\neq m.
	\end{equation}
	Denote $C_{k,j,l}:=C(x_{k,j,l})$, $R_{k,j,l}:=R(x_{k,j,l})$, and $Q_{k,j,l}:=Q(x_{k,j,l})$.
	By \eqref{eq:sup and inf}, when $k\in\N$ we get
	\begin{equation}\label{eq:measure of B divided by rq}
	\begin{split}
	\left(C_{k,j,l}^{-1/Q_{k,j,l}}\frac{\widetilde{\mu}(\tfrac 12 B_{k,j,l})}{r_{k,j,l}}\right)^{Q_{k,j,l}/(Q_{k,j,l}-1)}
	&=C_{k,j,l}^{-1/(Q_{k,j,l}-1)}\widetilde{\mu}(\tfrac 12 B_{k,j,l})\frac{\widetilde{\mu}(\tfrac 12 B_{k,j,l})^{1/(Q_{k,j,l}-1)}}{(r_{k,j,l})^{Q_{k,j,l}/(Q_{k,j,l}-1)}}\\
	&\le R_{k,j,l}^{1/(Q_{k,j,l}-1)}\widetilde{\mu}(\tfrac 12 B_{k,j,l}).
	\end{split}
	\end{equation}
	For every $k\in\N$, abbreviating $\sum_{j=1}^N \sum_{l}$ by $\sum_{j,l}$, by
	\eqref{eq:doubling at rx} we have
	\begin{align*}
	&\sum_{j,l}\frac{L_f(x_{k,j,l},r_{k,j,l})}{r_{k,j,l}}\mu(10B_{k,j,l})\\
	&\qquad\le C_{\Om}\sum_{j,l}\frac{L_f(x_{k,j,l},r_{k,j,l})}{h_f^{\vee}(x_{k,j,l})}h_f^{\vee}(x_{k,j,l})
	\frac{\widetilde{\mu}(\tfrac 12 B_{k,j,l})}{r_{k,j,l}}\\
	&\qquad\le 4C_{\Om} \sum_{j,l}C_{k,j,l}^{1/Q_{k,j,l}}\frac{L_f(x_{k,j,l},r_{k,j,l})}{2H_f^{\vee}(x_{k,j,l},r_{k,j,l})}
	h_f^{\vee}(x_{k,j,l}) C_{k,j,l}^{-1/Q_{k,j,l}} 
	\frac{\widetilde{\mu}(\tfrac 12 B_{k,j,l})}{r_{k,j,l}}\quad\textrm{by }\eqref{eq:Hf and hf}\\
	&\qquad\le 4C_{\Om}\sum_{j,l}C_{k,j,l}\left(\frac{L_f(x_{k,j,l},r_{k,j,l})}{2H_f^{\vee}(x_{k,j,l},r_{k,j,l})}\right)^{Q_{k,j,l}}\\
	&\qquad\qquad +4C_{\Om} \sum_{j,l} \frac{Q_{k,j,l}-1}{Q_{k,j,l}}h_f^{\vee}(x_{k,j,l})^{Q_{k,j,l}/(Q_{k,j,l}-1)}
	R_{k,j,l}^{1/(Q_{k,j,l}-1)}\widetilde{\mu}(\tfrac 12 B_{k,j,l}),
	\end{align*}
	where we used Young's inequality, estimating simply  $1/Q_{k,j,l}\le 1$ for the first term,
	and also \eqref{eq:measure of B divided by rq}.
	Using the second inequality of \eqref{eq:sup and inf} as well as \eqref{eq:Lebesgue point inequality},
	we estimate further
	\begin{align*}
	&\sum_{j,l}\frac{L_f(x_{k,j,l},r_{k,j,l})}{r_{k,j,l}}\mu(10B_{k,j,l})\\
	&\qquad\le 4C_{\Om}\sum_{j,l}\nu\Big(B\left(f(x_{k,j,l}), \frac{L_f(x_{k,j,l},r_{k,j,l})}{2H_f^{\vee}(x_{k,j,l},r_{k,j,l})}\right)\Big)
	+8C_{\Om}\sum_{j,l}\int_{\tfrac 12 B_{k,j,l}}h\,d\widetilde{\mu}\\
	&\qquad\le 4C_{\Om} N\nu(W_k)
	+8C_{\Om}\sum_{j,l}\int_{\tfrac 12 B_{k,j,l}}h\,d\widetilde{\mu}\quad\textrm{by }
	\eqref{eq:disjoint images}\\
	&\qquad\le  4C_{\Om} N(\nu(f(A_k))+2^{-k}\delta)
	+8C_{\Om} N \int_{U_k}h\,d\widetilde{\mu}\quad\textrm{by }\eqref{eq:choice of Wk}\\
	&\qquad\le  4C_{\Om} N(\nu(f(A_k))+2^{-k}\delta)
	+8C_{\Om} N \left(\int_{A_k} h\,d\widetilde{\mu}+2^{-k}\delta\right)
	\quad\textrm{by }\eqref{eq:choice of Uk}.
	\end{align*}
	We record this result: for all $k\in\N$,
	\begin{equation}\label{eq:big sum estimate}
	\begin{split}
	\sum_{j,l}\frac{L_f(x_{k,j,l},r_{k,j,l})}{r_{k,j,l}}\mu(10B_{k,j,l})\le  4C_{\Om} N(\nu(f(A_k))+2^{-k}\delta)
	+8C_{\Om} N \left(\int_{A_k}h\,d\widetilde{\mu}+2^{-k}\delta\right).
	\end{split}
	\end{equation}
	For $k=0$, by \eqref{eq:doubling at rx} and \eqref{eq:Hf and hf} we get
	\begin{equation}\label{eq:big sum estimate k0}
	\begin{split}
	&\sum_{j,l}\frac{L_f(x_{0,j,l},r_{0,j,l})}{r_{0,j,l}}\mu(2B_{0,j,l})\\
	&\quad \le 4C_{\Om}\sum_{j,l}C_{0,j,l}\frac{L_f(x_{0,j,l},r_{0,j,l})}{2H_f^{\vee}(x_{0,j,l},r_{0,j,l})}
	h_f^{\vee}(x_{0,j,l}) C_{0,j,l}^{-1} 
	\frac{\widetilde{\mu}(\tfrac 12 B_{0,j,l})}{r_{0,j,l}}\\
	&\quad \le 4C_{\Om}\sum_{j,l}\nu\left(B\left(f(x_{0,j,l}), \frac{L_f(x_{0,j,l},r_{0,j,l})}{2H_f^{\vee}(x_{0,j,l},r_{0,j,l}))}\right)\right)h_f^{\vee}(x_{0,j,l})R_{0,j,l}
	\quad\textrm{by }\eqref{eq:sup and inf}\\
	&\quad \le 4 C_{\Om} N\nu(W_{0})\cdot H\quad\textrm{by }\eqref{eq:disjoint images}
	\textrm{ and since }h_f^{\vee}(x)R(x)\le H\textrm{ for all }x\in \{Q=1\}\setminus E\\
	&\quad \le 4H C_{\Om} N(\nu(f(A_{0}))+\delta)\quad\textrm{by }\eqref{eq:choice of Wk}.
	\end{split}
	\end{equation}
	Combining this with \eqref{eq:big sum estimate},
	and noting that the sets $f(A_k)$ are $\nu$-measurable
	by the measurability of $Q(f^{-1}(\cdot))$,
	we get
	(the fact that the left-hand side of \eqref{eq:big sum estimate} has the balls
	$10B_{k,j,l}$ instead of $2B_{k,j,l}$ will be useful later)
	\begin{align*}
	\Vert \rho\Vert_{L^1(\Om)}
	&\le 2\sum_{k=0}^{\infty}\sum_{j,l}\frac{L_f(x_{k,j,l},r_{k,j,l})}{r_{k,j,l}}\mu(2B_{k,j,l})\\
	&\le 8H C_{\Om} N(\nu(f(A_{0}))+\delta)
	+8C_{\Om} N(\nu(f(\Om))+\delta)
	+16C_{\Om} N \left(\int_{\Om}h\,d\widetilde{\mu}+\delta\right).
	\end{align*}
Recall that $\delta>0$ was the upper bound for the radii in the coverings. 
Now we can choose functions $\rho$ with the choices $\delta=1/i$,
to get a sequence $\{\rho_i\}_{i=1}^{\infty}$ that is bounded in $L^1(\Om)$.

By \eqref{eq:upper gradient property proved}, the pair $(f,\rho_i)$ satisfies the upper 
gradient inequality for $1$-a.e.
curve $\gamma$ in $\Om$ with $\diam\gamma\ge 1/i$. Denote the exceptional family by
$\Gamma_i$, and $\Gamma:=\bigcup_{i=1}^{\infty}\Gamma_i$.
If $\gamma\notin \Gamma$ is a nonconstant curve in $\Om$,
then by \eqref{eq:upper gradient property proved} we have
\[
d_Y(f(\gamma(0)),f(\gamma(\ell_{\gamma})))\le
\int_{\gamma}\rho_i\,ds
\]
for every $i\in\N$ for which $\diam\gamma\ge 1/i$.
Thus
\[
d_Y(f(\gamma(0)),f(\gamma(\ell_{\gamma})))\le
\liminf_{i\to\infty}\int_{\gamma}\rho_i\,ds
\]
and so $f\in D^{\BV}(\Om;Y)$ by Definition \ref{def:BV def}.\\

\textbf{Part 2.} Now we show that $f\in D^{1}(\{Q>1\}\cup E;Y)$.

Recall the definition of $g$ from \eqref{eq:definition of g}.
As above with $\rho$, we can choose functions $g$ with the choices $\delta=1/i$,
to get a sequence $\{g_i\}_{i=1}^{\infty}$ that is bounded in $L^1(\Om)$.
We will show that $\{g_i\}_{i=1}^{\infty}$ is an equi-integrable sequence in $L^1(\Om)$.

The first condition of Definition \ref{def:equiintegrability}
holds automatically since $\Om$ as a bounded set has finite $\mu$-measure.
We check the second condition. Suppose by contradiction that
by passing to a subsequence of $\{g_i\}_{i=1}^{\infty}$ (not relabeled),
we find $0<\eps<1$ and
a sequence of sets $H_i\subset \Om$ such that $\mu(H_i)\to 0$ and
\[
\int_{H_i} g_{i}\,d\mu\ge \eps\quad\textrm{for all }i\in\N.
\]
By assumption $\widetilde{\mu}\ll  \mu$, so also $\widetilde{\mu}(H_i)\to 0$.
Now the balls $B_{k,j,l}=B(x_{k,j,l},r_{k,j,l})$ depend also on $i\in\N$, so we denote them
$B_{i,k,j,l}=B(x_{i,k,j,l},r_{i,k,j,l})$.
Thus we have
\begin{equation}\label{eq:equiintegrability first assumption}
\sum_{k=1}^{\infty}\sum_{j,l}\frac{L_f(x_{i,k,j,l},r_{i,k,j,l})}{r_{i,k,j,l}}\mu(2B_{i,k,j,l}\cap H_i)
\ge \frac{\eps}{2}\quad\textrm{for all }i\in\N.
\end{equation}
Note that since $f$ is injective,
\[
f\left(\bigcup_{k=K}^{\infty}A_k\right)\searrow \emptyset\quad \textrm{as }K\to\infty.
\]
Thus
\[
\nu\Big(f\left(\bigcup_{k=K}^{\infty}A_k\right)\Big)
\to 0\quad\textrm{as }K\to\infty,
\]
and we also have
\[
\int_{\bigcup_{k=K}^{\infty}A_k}h\,d\widetilde{\mu}\to 0\quad\textrm{as }K\to\infty.
\]
Thus by \eqref{eq:big sum estimate}, choosing and fixing a sufficiently large $K\in\N$, we have
\begin{align*}
&\sum_{k=K}^{\infty}\sum_{j,l}\frac{L_f(x_{i,k,j,l},r_{i,k,j,l})}{r_{i,k,j,l}}\mu(2B_{i,k,j,l})\\
&\qquad\le 4C_{\Om} N\big(\nu\Big(f\left(\bigcup_{k=K}^{\infty}A_k\right)\Big)+2^{-K+1}\big)
 +8C_{\Om} N \left(\int_{\bigcup_{k=K}^{\infty}A_k}h\,d\widetilde{\mu}+2^{-K+1}\right)\\
&\qquad<\frac{\eps}{4}
\end{align*}
for all $i\in\N$.
Thus by \eqref{eq:equiintegrability first assumption}, we have in fact
\[
\sum_{k=1}^{K}\sum_{j,l}\frac{L_f(x_{i,k,j,l},r_{i,k,j,l})}{r_{i,k,j,l}}\mu(2B_{i,k,j,l}\cap H_i)
\ge \frac{\eps}{4}\quad\textrm{for all }i\in\N.
\]
Passing to another subsequence (not relabeled) we can assume that
for some fixed $k\in\{1,\ldots,K\}$, we have
\[
\sum_{j,l}\frac{L_f(x_{i,k,j,l},r_{i,k,j,l})}{r_{i,k,j,l}}
\mu(2B_{i,k,j,l}\cap H_i)\ge \frac{\eps}{4K}\quad\textrm{for all }i\in\N.
\]
Passing to yet another sequence (not relabeled), we can assume that for some fixed $j\in\{1,\ldots,N\}$,
\[
 \sum_{l}\frac{L_f(x_{i,k,j,l},r_{i,k,j,l})}{r_{i,k,j,l}}
\mu(2B_{i,k,j,l}\cap H_i)\ge \frac{\eps}{4KN}\quad\textrm{for all }i\in\N.
\]
The indices $k$ and $j$ are now fixed, so we drop them from the notation.
Thus we write
\begin{equation}\label{eq:equintegrability assumption}
 \sum_{l}\frac{L_f(x_{i,l},r_{i,l})}{r_{i,l}}
\mu(2B_{i,l}\cap H_i)\ge \frac{\eps}{4KN}\quad\textrm{for all }i\in\N.
\end{equation}
Choose $M$ to be the following (very large) number:
\begin{equation}\label{eq:M choice}
M:=
\frac{10KN}{\eps}\left[4C_{\Om} N(\nu(f(\Om))+1)
+8C_{\Om} N \left(\int_{\Om}h\,d\widetilde{\mu}+1\right)\right],
\end{equation}
and consider two sets of indices $I_1^{i}$ and $I_2^{i}$ as follows:
for $l\in I_1^{i}$, we have
\[
\frac{\widetilde{\mu}(2 B_{i,l}\cap H_i)}{\mu(10B_{i,l})}\le \frac{1}{M}.
\]
Then let $I_2^{i}$ consist of the remaining indices.
By the 5-covering lemma (see e.g. \cite[p. 60]{HKSTbook}), we find $J^i\subset I_2^i$ such that
the balls $2B_{i,l}$, $l\in J^i$, are pairwise disjoint and
\[
\bigcup_{l\in I_2^i}2B_{i,l}
\subset \bigcup_{l\in J^i}10 B_{i,l}.
\]
Thus
\begin{equation}\label{eq:measure of Itwo balls}
\mu\left(\bigcup_{l\in I_2^i}2B_{i,l}\right)
\le \sum_{l\in  J^i} \mu(10B_{i,l})
\le MC_{\Om}\sum_{l\in J^i} \widetilde{\mu}(2  B_{i,l}\cap H_i)
\le MC_{\Om}\widetilde{\mu}(H_i).
\end{equation}
Now
\begin{equation}\label{eq:split into two sums}
\begin{split}
&\sum_{l}\frac{L_f(x_{i,l},r_{i,l})}{r_{i,l}}
\mu(2B_{i,l}\cap H_i)\\
&\qquad=  \sum_{l\in I_1^i} \frac{L_f(x_{i,l},r_{i,l})}{r_{i,l}}
\mu(2B_{i,l}\cap H_i)
+ \sum_{l\in I_2^i} \frac{L_f(x_{i,l},r_{i,l})}{r_{i,l}}
\mu(2B_{i,l}\cap H_i).
\end{split}
\end{equation}
For the first term, we can make the (rough) estimate
\begin{align*}
&\sum_{l\in I_1^i}\frac{L_f(x_{i,l},r_{i,l})}{r_{i,l}}\mu(2B_{i,l}\cap H_i)\\
&\qquad \le\sum_{l\in I_1^i}\frac{L_f(x_{i,l},r_{i,l})}{r_{i,l}}\widetilde{\mu}(2B_{i,l}\cap H_i)\\
&\qquad \le \frac{1}{M}\sum_{l=1}^{\infty}\frac{L_f(x_{i,l},r_{i,l})}{r_{i,l}}\mu(10B_{i,l})\\
&\qquad \le\frac{1}{M}\left[4C_{\Om} N(\nu(f(\Om))+1/i)
+8C_{\Om} N \left(\int_{\Om}h\,d\widetilde{\mu}+1/i\right)\right]
\end{align*}
by \eqref{eq:big sum estimate}.
For the second term, we use the generalized Young's inequality:
if $1/p+1/q=1$ and $1+1/K\le p<\infty$, then $q\le K+1$, and so
by the usual Young's inequality we have for any $a,b\ge 0$ and any choice of $0<\kappa<1$ that
\begin{equation}\label{eq:generalized Young}
ab=\kappa^{1/p} a \kappa^{-1/p} b
\le \frac{1}{p}\kappa a^p + \frac{1}{q}\kappa^{-q/p} b^q
\le  \kappa a^p+\frac{1}{q}\kappa^{-K}b^q.
\end{equation}
We choose
\begin{equation}\label{eq:kappa definition}
\kappa:=\frac{\eps}{10KN} \frac{1}{4 C_{\Om} (\nu(f(\Om))+1)}.
\end{equation}
Now by \eqref{eq:doubling at rx} we get
\begin{align*}
&\sum_{l\in I_2^i} \frac{L_f(x_{i,l},r_{i,l})}{r_{i,l}}\mu(2B_{i,l}\cap H_i)\\
&\qquad\le C_{\Om}\sum_{l\in I_2^i}\frac{L_f(x_{i,l},r_{i,l})}{h_f^{\vee}(x_{i,l})}h_f^{\vee}(x_{i,l})
\frac{\widetilde{\mu}(\tfrac 12 B_{i,l})}{r_{i,l}}\\
&\qquad\le 4C_{\Om} \sum_{l\in I_2^i}C_{i,l}^{1/Q_{i,l}}\frac{L_f(x_{i,l},r_{i,l})}{2H_f^{\vee}(x_{i,l},r_{i,l})}
h_f^{\vee}(x_{i,l})C_{i,l}^{-1/Q_{i,l}}
\frac{\widetilde{\mu}(\tfrac 12 B_{i,l})}{r_{i,l}}\quad\textrm{by }\eqref{eq:Hf and hf}\\
&\qquad\le 4 C_{\Om} \kappa \sum_{l\in I_2^i}C_{i,l}\left(\frac{L_f(x_{i,l},r_{i,l})}{2H_f^{\vee}(x_{i,l},r_{i,l})}\right)^{Q_{i,l}}\\
&\qquad\qquad +4C_{\Om}\kappa^{-K}\sum_{l\in I_2^i}\frac{Q_{i,l}-1}{Q_{i,l}}
R_{i,l}^{1/(Q_{i,l}-1)}
h_f^{\vee}(x_{i,l})^{Q_{i,l}/(Q_{i,l}-1)}\widetilde{\mu}(\tfrac 12 B_{i,l})
\end{align*}
by the generalized Young's inequality \eqref{eq:generalized Young} and
\eqref{eq:measure of B divided by rq}; note that $Q_{i,l}\ge 1+1/K$ by the definition of the sets
$A_k$ in \eqref{eq:def of Ak}.
Using the second inequality of \eqref{eq:sup and inf} as well as \eqref{eq:Lebesgue point inequality},
we estimate further
\begin{align*}
&\sum_{l\in I_2^i} \frac{L_f(x_{i,l},r_{i,l})}{r_{i,l}}\mu(2B_{i,l}\cap H_i)\\
&\qquad\le 4 \kappa C_{\Om}  \sum_{l\in I_2^i}\nu\Big(B\left(f(x_{i,l}), \frac{L_f(x_{i,l},r_{i,l})}{2H_f^{\vee}(x_{i,l},r_{i,l}))}\right)\Big)+8\kappa^{-K}C_{\Om}\sum_{l\in I_2^i}\int_{\tfrac 12 B_{i,l}}h\,d\widetilde{\mu}\\
&\qquad\le 4 \kappa C_{\Om} \nu(W_k)
+8\kappa^{-K}C_{\Om}\sum_{l\in I_2^i}\int_{\tfrac 12 B_{i,l}}h\,d\widetilde{\mu}\quad\textrm{by }
\eqref{eq:disjoint images}\\
&\qquad\le  4 \kappa C_{\Om} (\nu(f(A_k))+1/i)
+8\kappa^{-K}C_{\Om} \int_{\bigcup_{l\in I_2^i}2B_{i,l}}h\,d\widetilde{\mu}\quad\textrm{by }\eqref{eq:choice of Wk}\\
&\qquad\le  \frac{\eps}{10KN}
+8\kappa^{-K}C_{\Om} \int_{\bigcup_{l\in I_2^i}2B_{i,l}}h\,d\widetilde{\mu}
\end{align*}
by \eqref{eq:kappa definition}.

In total, for \eqref{eq:split into two sums} we have
\begin{align*}
&\sum_{l=1}^{\infty}\frac{L_f(x_{i,l},r_{i,l})}{r_{i,l}}
\mu(2B_{i,l}\cap H_i)\\
&\qquad\le \frac{1}{M}\left[4C_{\Om} N(\nu(f(\Om))+1/i)
+8C_{\Om} N \left(\int_{\Om}h\,d\widetilde{\mu}+1/i\right)\right]\\
&\qquad\qquad +\frac{\eps}{10KN}
+8\kappa^{-K}C_{\Om} \int_{\bigcup_{l\in I_2^i}2B_{i,l}}h\,d\widetilde{\mu}\\
&\qquad\le \frac{\eps}{10KN}+\frac{\eps}{10KN}+8\kappa^{-K}C_{\Om} \int_{\bigcup_{l\in I_2^i}2B_{i,l}}h\,d\widetilde{\mu}\quad\textrm{by }\eqref{eq:M choice}\\
&\qquad\to \frac{2\eps}{10KN}+0\quad\textrm{as }i\to\infty
\end{align*}
by the absolute continuity of the integral, since by \eqref{eq:measure of Itwo balls}
we had $\mu\left(\bigcup_{l\in I_2^i}2B_{i,l}\right)\le MC_{\Om}\widetilde{\mu}(H_i)\to 0$
and then also $\widetilde{\mu}\left(\bigcup_{l\in I_2^i}2B_{i,l}\right)\to 0$.
But this contradicts \eqref{eq:equintegrability assumption}.
Thus $\{g_i\}_{i=1}^{\infty}$ is an equi-integrable sequence in $L^1(\Om)$.

Then we can use the Dunford-Pettis theorem (Theorem \ref{thm:dunford-pettis})
to conclude that passing to a subsequence (not relabeled) we find $g\in L^1(\Om)$ such that
$g_i\to g$ weakly in $L^1(\Om)$.
Recall from \eqref{eq:upper gradient property proved 2} that the pair $(f,g_i)$ satisfies the
upper gradient inequality for $1$-a.e.
curve $\gamma$ in $\{Q>1\}\cup E$ with $\diam\gamma\ge 1/i$.
By Mazur's lemma (Lemma \ref{lem:Mazur lemma}),
we find convex combinations $\widehat{g}_i:=\sum_{j=i}^{N_i}a_{i,j} g_j$
such that $\widehat{g}_i\to g$ in $L^1(\Om)$ for some $g\in L^1(\Om)$,
and thus also $\widehat{g}_i\to g$ in $L^1(\{Q>1\}\cup E)$.
Each pair $(f,\widehat{g}_i)$ satisfies the upper gradient inequality for $1$-a.e.
curve $\gamma$ in $\{Q>1\}\cup E$ with $\diam\gamma\ge 1/i$.
By Fuglede's lemma (Lemma \ref{lem:Fuglede lemma}),
the pair $(f,g)$ satisfies the upper gradient inequality for $1$-a.e.
curve $\gamma$ in $\{Q>1\}\cup E$ with $\diam\gamma\ge 1/i$ for every $i\in\N$, and thus in fact
for $1$-a.e. nonconstant
curve $\gamma$ in $\{Q>1\}\cup E$.
Thus $g\in L^1(\{Q>1\}\cup E)$ is a $1$-weak upper gradient of $f$ in $\{Q>1\}\cup E$ and so $f\in D^1(\{Q>1\}\cup E;Y)$.
\end{proof}

\section{The exceptional set $E$}\label{sec:exceptional set}

A relevant question concerning Theorem \ref{thm:main theorem} is, what kind of exceptional
sets $E$ satisfy condition \eqref{eq:E assumption} for all continuous mappings
$f\colon \Om\to Y$? In Balogh--Koskela--Rogovin \cite{BKR} and Williams \cite{Wi},
it is assumed that $E$ is $\sigma$-finite with respect
to the $Q-1$-dimensional Hausdorff measure. Since we do not assume the space to be
Ahlfors-regular, we instead consider a codimension $1$ Hausdorff measure.

\begin{definition}\label{def:centered codimension one measure}
For any set $A\subset X$ and $0<R<\infty$, the restricted doubled Hausdorff content
of codimension $1$ is defined by
\[
\widehat{\mathcal{H}}_{R}(A):=\inf\left\{ \sum_{j}
\frac{\mu(B(x_{j},2r_{j}))}{r_{j}}\colon A\subset\bigcup_{j}B(x_{j},r_{j}),\,r_{j}\le R\right\},
\]
where $j$ runs over a finite or countable index set.
The doubled codimension $1$ Hausdorff measure of $A\subset X$ is then defined by
\[
\widehat{\mathcal{H}}(A):=\lim_{R\rightarrow 0}\widehat{\mathcal{H}}_{R}(A).
\]
\end{definition}

Note that our definition is slightly different compared to the usual definition of codimension $1$
Hausdorff measure $\mathcal H$ used in the literature.
The difference is that in the sum we consider balls dilated by
factor $2$, hence the word ``doubled''.
Despite the somewhat peculiar definition, $\widehat{\mathcal{H}}$ is often easy to use,
see Corollary \ref{cor:Linfinity}.
Moreover, if $\mu$ is a doubling measure, then $\mathcal H$ and $\widehat{\mathcal{H}}$ are comparable,
and in an Ahlfors $Q$-regular space both are comparable to $\mathcal H^{Q-1}$.

The following lemma and its proof are similar to \cite[Lemma 3.5]{BKR}.

\begin{lemma}\label{lem:image Lebesgue measure zero}
	Suppose $H\subset X$ and that $f\colon H\to Y$ is continuous. Suppose $E\subset H$ has
	$\sigma$-finite $\widehat{\mathcal{H}}$-measure.
	Then $\Mod_1(\Gamma)=0$ for
	\[
	\Gamma:=\{\gamma\subset H\colon \mathcal H^1(f(\gamma\cap E))>0\}.
	\]
\end{lemma}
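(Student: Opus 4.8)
The plan is to reduce to the case where $E$ itself has finite $\widehat{\mathcal H}$-measure (by countable subadditivity of $\Mod_1$, writing $E=\bigcup_n E_n$ with $\widehat{\mathcal H}(E_n)<\infty$ and $\Gamma\subset\bigcup_n\{\gamma\subset H\colon \mathcal H^1(f(\gamma\cap E_n))>0\}$), and then to exhibit, for each $\eps>0$, an admissible function $\rho$ for $\Gamma$ with small $L^1$-norm. The function will be built directly from an efficient covering realizing the content $\widehat{\mathcal H}_R(E)$. Fix $\eps>0$ and a small scale $R>0$; choose balls $B_j=B(x_j,r_j)$ with $r_j\le R$, $E\subset\bigcup_j B_j$, and $\sum_j \mu(2B_j)/r_j\le \widehat{\mathcal H}_R(E)+1\le C<\infty$. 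Here continuity of $f$ enters: since $f$ is continuous on $H$, on each $B_j\cap H$ the oscillation $L_j:=\sup_{y\in B_j\cap H}d_Y(f(y),f(x_j))$ is finite, and by choosing $R$ and hence the $r_j$ small we may also assume $L_j$ is as small as we like — more precisely, for each point $x\in E$ we first pick a radius so that $f$ oscillates little there, then run the content-covering argument only with such admissible balls, so that $\sum_j L_j$ is controlled. The candidate is
\[
\rho:=\sum_j \frac{L_j}{r_j}\,\ch_{2B_j}.
\]

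For admissibility: let $\gamma\in\Gamma$, so $\mathcal H^1(f(\gamma\cap E))>0$. The sets $f(B_j)$ cover $f(\gamma\cap E)$, and $\diam f(\gamma\cap B_j)\le 2L_j$; since $\mathcal H^1$ on $Y$ is (outer) subadditive and $f(\gamma\cap E)\subset \bigcup_j f(\gamma\cap B_j)$, we get $\sum_j 2L_j\ge \mathcal H^1(f(\gamma\cap E))=:t>0$. On the other hand, whenever $\gamma$ meets $B_j$ it must traverse a subcurve inside $2B_j$ of length at least $r_j$ (if $\diam\gamma\ge 2r_j$; curves of tiny diameter can be handled by noting they contribute through some fixed $B_j$ and are negligible, or by first intersecting $\Gamma$ with $\{\diam\gamma\ge\sigma\}$ and letting $\sigma\to0$), so $\int_\gamma \ch_{2B_j}\,ds\ge r_j$ for every $j$ with $B_j\cap\gamma\neq\emptyset$. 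Hence
\[
\int_\gamma \rho\,ds\ge \sum_{B_j\cap\gamma\neq\emptyset}\frac{L_j}{r_j}\cdot r_j=\sum_{B_j\cap\gamma\neq\emptyset}L_j\ge \frac t2>0.
\]
Rescaling $\rho$ by $2/t$ would make it admissible for the subfamily of $\Gamma$ with $\mathcal H^1(f(\gamma\cap E))\ge t$; running over $t=1/k$ and using subadditivity of $\Mod_1$ again reduces everything to: for each fixed $t>0$, the family $\Gamma_t:=\{\gamma\colon \mathcal H^1(f(\gamma\cap E))\ge t\}$ has $\Mod_1(\Gamma_t)=0$. For $\Gamma_t$, the function $(2/t)\rho$ is admissible and $\|(2/t)\rho\|_{L^1(\mu)}\le (2/t)\sum_j \frac{L_j}{r_j}\mu(2B_j)$.

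The main obstacle is making this last norm small, and this is exactly where one must juggle two competing smallness requirements coming from a single scale $R$. One needs $\sum_j \frac{L_j}{r_j}\mu(2B_j)$ small, while the content covering only controls $\sum_j \frac{\mu(2B_j)}{r_j}$; so one wants $L_j\le \eta$ uniformly, giving the bound $\eta\cdot(\widehat{\mathcal H}_R(E)+1)$. But $\widehat{\mathcal H}_R(E)$ need only be bounded (not small) as $R\to0$ — it increases to $\widehat{\mathcal H}(E)<\infty$ — whereas by uniform continuity of $f$ on compact pieces (or by first covering $E$ by finitely many sets on each of which $f$ varies by at most $\eta$, then intersecting the content-covering balls with these) we can force $L_j\le\eta$ once $R$ is small. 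Thus $\|(2/t)\rho\|_{L^1}\le (2\eta/t)(\widehat{\mathcal H}(E)+1)\to 0$ as $\eta\to0$, giving $\Mod_1(\Gamma_t)=0$. A technical wrinkle is that $E$ need not be closed or $f$ defined beyond $H$, so ``$f$ varies little on a ball'' must be phrased pointwise: for each $x\in E$ pick $r_x$ with $d_Y(f(y),f(x))<\eta$ for $y\in B(x,2r_x)\cap H$, restrict the content infimum to coverings by such balls $B(x,r_x)$ (which still computes $\widehat{\mathcal H}$ since $\widehat{\mathcal H}$ is a limit over $R\to0$ and these balls form a fine cover), and proceed as above; I expect verifying that this restricted content still equals $\widehat{\mathcal H}(E)$, or at least stays bounded, to be the one place demanding a little care, but it follows from a standard Vitali-type selection together with the definition of $\widehat{\mathcal H}$ as the limit of $\widehat{\mathcal H}_R$.
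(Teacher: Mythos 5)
Your overall strategy --- cover $E$ by balls nearly realizing $\widehat{\mathcal{H}}(E)$, build an admissible function supported on the doubled balls with density $\sim 1/r_j$, and use that a sufficiently long curve meeting $B_j$ spends length at least $r_j$ in $2B_j$ --- is the same as the paper's. But there is a genuine gap at the heart of your admissibility step: the inequality $\sum_j 2L_j\ge \mathcal H^1(f(\gamma\cap E))$ does not follow from subadditivity. Subadditivity gives $\mathcal H^1(f(\gamma\cap E))\le \sum_j \mathcal H^1(f(\gamma\cap B_j))$, but $\mathcal H^1(f(\gamma\cap B_j))$ is \emph{not} controlled by $\diam f(\gamma\cap B_j)\le 2L_j$: in a general metric space $Y$ a set of tiny diameter can have large, even infinite, $\mathcal H^1$-measure. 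What a cover by sets of diameter $\le 2\eta$ actually yields is $\sum_j 2L_j\ge \mathcal H^1_{2\eta}(f(\gamma\cap E))$, i.e.\ a bound by the Hausdorff \emph{content} at scale $2\eta$. For fixed $\eta$ this content can be far below $t$ even when $\mathcal H^1(f(\gamma\cap E))\ge t$ (picture an image set consisting of many arcs packed into a small ball), and the scale at which the content climbs up to $t$ depends on $\gamma$. Hence $(2/t)\rho$ is not admissible for $\Gamma_t$ and your bound $\Mod_1(\Gamma_t)\le (2\eta/t)(\widehat{\mathcal{H}}(E)+1)$ does not follow. This is exactly the difficulty the paper's proof is built around: from $\mathcal H^1_{1/m}(f(\gamma\cap E))>\eps$ it extracts $m$ points of $f(\gamma\cap E)$ that are $\eps/(2m)$-separated, which forces $\gamma$ to meet at least $m$ distinct covering balls once the cover is fine relative to the modulus of continuity of $f$ on $\gamma$, giving $\int_\gamma \rho_l\,ds\ge m$ with $\Vert\rho_l\Vert_{L^1}$ bounded independently of $m$; one then lets $m\to\infty$. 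Any repair of your argument needs an analogous exhaustion of $\Gamma_t$ by subfamilies on which the content at a \emph{fixed} scale is bounded below, combined with monotonicity/subadditivity of $\Mod_1$ over that exhaustion.

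A secondary problem is your restriction of the content-realizing covers to balls on which $f$ oscillates by at most $\eta$. The infimum defining $\widehat{\mathcal{H}}_R$ runs over arbitrary coverings by balls of radius at most $R$; passing to a fine subfamily adapted to $f$ can increase that infimum, and without a doubling measure there is no Vitali-type selection that recenters or prunes balls while keeping $\sum_j \mu(2B_j)/r_j$ bounded by $\widehat{\mathcal{H}}(E)+1$ (recentering changes $\mu(2B_j)$ uncontrollably). The paper sidesteps this entirely by taking an arbitrary near-optimal cover at scale $1/l$ and placing the oscillation control on the compact set $\gamma$ via uniform continuity of $f|_\gamma$, rather than on the covering balls; adopting that device resolves this issue and feeds directly into the separated-points argument above.
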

\begin{proof}
	\phantom\\
	
	\textbf{Step 1.}
	First assume that $\widehat{\mathcal{H}}(E)<\infty$.
	Let $0<\eps<1$ and
	\[
	\Gamma_{\eps}:=\{\gamma\subset H\colon \ell_{\gamma}>\eps
	\ \textrm{ and }\ \,\mathcal H^1(f(\gamma\cap E))>\eps\}.
	\] 
	For every $l\in\N$ with $1/l<\eps/2$,
	we choose a covering $\{B(x_{l,k},r_{l,k})\}_{k}$ of $E$ with $r_{l,k}\le 1/l$ and
	\[
	\sum_{k}\frac{\mu(B(x_{l,k},2r_{l,k}))}{r_{l,k}}\le \widehat{\mathcal{H}}(E)+1.
	\]
	Define the functions
	\[
	\rho_l:=\sum_{k}\frac{\ch_{B(x_{l,k},2r_{l,k})}}{r_{l,k}},\quad l\in\N.
	\]
	For every $l\in\N$, we have
	\[
	\Vert \rho_l\Vert_{L^1(X)}
	\le \sum_{k}\frac{\mu(B(x_{l,k},2r_{l,k}))}{r_{l,k}}
	\le \widehat{\mathcal{H}}(E)+1.
	\]
	Let $\gamma\in\Gamma_{\eps}$.
	Note that $f$ is continuous on $H$ and thus uniformly continuous in the compact set $\gamma$.
	Since $\mathcal H^1(f(\gamma\cap E))>\eps$,
	also $\mathcal H^1_{1/m}(f(\gamma\cap E))>\eps$ for all sufficiently large $m\in\N$.
	For all such $m$,
	there necessarily exist points $y_1,\ldots,y_m\in f(\gamma\cap E)$ such that
	\[
	d_Y(y_j,y_k)\ge \frac{\eps}{2m},\quad j\neq k.
	\]
	By the uniform continuity, there exists $l$ depending on $m$ such that
	$d_Y(f(x),f(y))< \eps/(2m)$ for all $x,y\in\gamma$ with $d(x,y)<2/l$.
	Thus there are at least $m$ different balls $B(x_{l,k},r_{l,k})$ that have nonempty intersection
	with $\gamma$. Then $\gamma$ intersects each ball $B(x_{l,k},2r_{l,k})$ at least for the length $r_{l,k}$. Hence
	\[
	\int_{\gamma}\rho_l\,ds\ge m.
	\]
	Thus
	\[
	\Mod_1(\Gamma_{\eps})\le \frac{1}{m}\Vert \rho_l\Vert_{L^1(X)}
	\le \frac{1}{m}\left(\widehat{\mathcal{H}}(E)+1\right).
	\]
	Letting $m\to\infty$, we get $\Mod_1(\Gamma_{\eps})=0$.
	Note that $\Gamma=\bigcup_{j=1}^{\infty}\Gamma_{1/j}$ and so
	by the subadditivity of the $1$-modulus,
	\[
	\Mod_1(\Gamma)\le \sum_{j=1}^{\infty}\Mod_1(\Gamma_{1/j})=0.
	\]
	
	\textbf{Step 2.}
	In the general case,
	$E$ has $\sigma$-finite $\widehat{\mathcal{H}}$-measure, so
	we can write $E=\bigcup_{i=1}^{\infty}E_i$ with $\widehat{\mathcal{H}}(E_i)<\infty$.
	Now $\Gamma=\bigcup_{i=1}^{\infty}\Gamma_i$ with
	\[
	\Gamma_i:=\{\gamma\subset H\colon \mathcal H^1(f(\gamma\cap E_i))>0\}.
	\]
	By Step 1, we have $\Mod_1(\Gamma_i)=0$ and so by using subadditivity again, we get
	$\Mod_1(\Gamma)=0$.
\end{proof}

Now we give the following sufficient condition for a set $E$ to satisfy
condition
\eqref{eq:E assumption} for all continuous mappings
$f\colon \Om\to Y$.

\begin{proposition}\label{prop:exceptional set E}
Suppose $f\colon \Om\to Y$ is continuous. Suppose $E=E_1\cup E_2$ is a subset of $\Om$ such that
$E_1$ is an at most countable set, and $E_2$ is $\sigma$-finite
with respect to $\widehat{\mathcal{H}}$.
Then $\Mod_1(\{\gamma\subset \Om\colon \mathcal H^1(f(\gamma\cap E))>0\})=0$.
\end{proposition}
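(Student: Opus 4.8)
The plan is to decompose the curve family
$\Gamma:=\{\gamma\subset\Om\colon \mathcal H^1(f(\gamma\cap E))>0\}$ according to the splitting $E=E_1\cup E_2$ and treat the two pieces separately. First I would observe that $\gamma\cap E=(\gamma\cap E_1)\cup(\gamma\cap E_2)$, so by the (countable) subadditivity of $\mathcal H^1$ we have
$\mathcal H^1(f(\gamma\cap E))\le \mathcal H^1(f(\gamma\cap E_1))+\mathcal H^1(f(\gamma\cap E_2))$ for every curve $\gamma$. Hence $\Gamma\subset\Gamma_1\cup\Gamma_2$, where $\Gamma_i:=\{\gamma\subset\Om\colon \mathcal H^1(f(\gamma\cap E_i))>0\}$. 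By the subadditivity of the $1$-modulus it is then enough to show $\Mod_1(\Gamma_1)=\Mod_1(\Gamma_2)=0$.

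For $\Gamma_2$ this is immediate from Lemma \ref{lem:image Lebesgue measure zero} applied with $H=\Om$: the mapping $f$ is continuous on $\Om$ and $E_2$ is $\sigma$-finite with respect to $\widehat{\mathcal H}$, so that lemma gives $\Mod_1(\Gamma_2)=0$ directly.

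For $\Gamma_1$ the argument is even more elementary. Since $E_1$ is at most countable, for any curve $\gamma$ the intersection $\gamma\cap E_1$ is at most countable, and therefore so is its image $f(\gamma\cap E_1)$. An at most countable subset of $Y$ has $\mathcal H^1$-measure zero, so $\mathcal H^1(f(\gamma\cap E_1))=0$ for \emph{every} curve $\gamma$ in $\Om$; that is, $\Gamma_1=\emptyset$, and trivially $\Mod_1(\Gamma_1)=0$. Combining the two cases, $\Mod_1(\Gamma)\le\Mod_1(\Gamma_1)+\Mod_1(\Gamma_2)=0$, which is the claim.

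I do not expect any genuine obstacle here: the statement is a routine decomposition and all the real work is carried by Lemma \ref{lem:image Lebesgue measure zero}. The only point worth flagging is that the countable part $E_1$ is handled through the observation that continuous maps send countable sets to $\mathcal H^1$-null sets, rather than through any content estimate on $E_1$ — this matters precisely because, in the present generality, a single point of $E_1$ may carry infinite $\widehat{\mathcal H}$-measure and hence need not be covered by Lemma \ref{lem:image Lebesgue measure zero}.
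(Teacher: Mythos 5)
Your proof is correct and follows essentially the same route as the paper: the countable part $E_1$ is disposed of by noting that a continuous image of a countable set is countable and hence $\mathcal H^1$-null, and the $\sigma$-finite part $E_2$ is handled by Lemma \ref{lem:image Lebesgue measure zero}. The paper's version is just a more compressed statement of the same two observations.
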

\begin{proof}
For every curve $\gamma\subset\Om$, the set $f(\gamma\cap E_1)$ has cardinality at most that
of $\gamma\cap E_1$, and so $\mathcal H^1(f(\gamma\cap E_1))=0$. Thus the result
follows from Lemma \ref{lem:image Lebesgue measure zero}.
\end{proof}

\begin{proof}[Proof of Theorem \ref{thm:main theorem intro}]
	From the definition of $\widehat{\mathcal H}$ it is straightforward to check that a set with
	finite $\widehat{\mathcal H}$-measure has zero $\mu$-measure, and so $E$ is $\mu$-measurable.
	Note that $Q(f^{-1}(\cdot))$
	is a $\nu$-measurable function on $Y$ since $f$
	as a continuous mapping maps Borel sets to
	\emph{analytic sets}, see \cite[Theorem 14.2, Theorem 21.10]{Kech}.
The theorem now follows from Theorem \ref{thm:main theorem} together with
Proposition \ref{prop:exceptional set E}.
\end{proof}

\begin{remark}\label{rem:on the exceptional set E}
A countable set is always $\sigma$-finite with respect to the $Q-1$-dimensional Hausdorff measure,
when $Q\ge 1$. Thus when considering this measure, the set $E_1$ could always be included
in the set $E_2$ in Proposition \ref{prop:exceptional set E}, but in our setting it can
happen that even a single point has infinite $\widehat{\mathcal H}$-measure,
see Example \ref{ex:bowtie} below.

In an Ahlfors $Q$-regular space, the $Q-1$-dimensional Hausdorff measure is clearly comparable
to $\widehat{\mathcal H}$ (and also comparable to the usual codimension $1$ Hausdorff measure
$\mathcal H$).
Thus in an Ahlfors $Q$-regular space our condition on $E$ reduces to that required in
\cite{BKR,Wi}.

Concerning the assumption $\nu(f(\Om))<\infty$ of Theorem \ref{thm:main theorem}, note that 
as a continuous mapping $f$ is bounded in every $\Om'\Subset\Om$
(i.e. $\overline{\Om'}$ is a compact subset of $\Om$).
Thus we have $\nu(f(\Om'))<\infty$ and also $f\in L^1(\Om';Y)$,
and so we get in fact $f\in N^{1,1}(\Om';Y)$.
\end{remark}

\section{Examples and applications}\label{sec:examples}

There are many examples of spaces equipped with measures that satisfy the asymptotic conditions
of Theorem \ref{thm:main theorem},
even though these measures are not globally or even locally doubling or Ahlfors regular.
To begin with, we consider the following simple example involving weights;
by a weight we mean a nonnegative locally integrable function.

\begin{example}\label{ex:weights}
	Suppose $(X_0,d,\mu_0)$ and $(Y,d_Y,\nu)$ are complete Ahlfors $Q$-regular spaces, with $Q>1$.
	Define $X$ to be the same metric space as $X_0$ but equipped with a weighted measure
	$d\mu:=w\,d\mu_0$.
	Suppose $\Om\subset X$ is open and bounded and
	that there exist sets $E_1,E_2\subset \Om$ such that $E_1\cap E_2=\emptyset$,
	$E_1$ is at most countable and $w|_{X\setminus E_1}$ finite and continuous,
	and $E_2$ has $\sigma$-finite $\mathcal H^{Q-1}$-measure and $w>0$ in $X\setminus E_2$.
	Using the continuity of $w|_{X\setminus E_1}$, it is straightforward to check that
	$E_2$ then has also $\sigma$-finite $\widehat{\mathcal H}$-measure.
	Take $E:=E_1\cup E_2$.
	Suppose
	$f\colon \Om\to Y$ is injective and continuous with $\nu(f(\Om))<\infty$.
	For every $x\in\Om\setminus E$, by the continuity of  $w|_{X\setminus E_1}$ we have
	\[
	\limsup_{r\to 0}\frac{\mu(B(x,2r))}{\mu(B(x,r))}
	=\limsup_{r\to 0}\frac{w(x)\mu_0(B(x,2r))}{w(x)\mu_0(B(x,r))}\le 2^Q C_A^2,
	\]
	where $C_A\ge 1$ is the Ahlfors regularity constant of $X_0$ and $Y_0$, and so
	$\mu$ is asymptotically doubling in $\Om\setminus E$.
	Choosing $R(x):=2C_A^2w(x)$,
	Condition (3) of Theorem \ref{thm:main theorem intro} becomes
	\[
	(w(\cdot)h_f^{\vee}(\cdot)^{Q})^{1/(Q-1)}\in L^1(\Om,\mu),\quad\textrm{or equivalently}\quad 
	(w(\cdot)h_f^{\vee}(\cdot))^{Q/(Q-1)}\in L^1(\Om,\mu_0).
	\]
	If this is satisfied, and also $h_f<\infty$ in $\Om\setminus E$, then $f\in D^1(\Om;Y)$
	by Theorem \ref{thm:main theorem intro}.
\end{example}

The following type of spaces achieved by ``glueing'' spaces of possibly different dimensions
are commonly considered in analysis on metric spaces.
In such spaces the measure is generally not Ahlfors regular nor even doubling in most cases,
but we can now include these spaces in the theory.

\begin{example}\label{ex:bowtie}
	For each $k=1,\ldots,m$, let $n_k\ge 2$ and
	\[
	X_k:=\{x=(x_1,\ldots,x_{n_k})\in\R^{n_k}\colon x_j\ge 0\textrm{ for all }j=1,\ldots,n_k\}.
	\]
	Define the space $X=Y$ by ``glueing'' the above spaces together at the origin $\{0\}$;
	that is, the metric is the Euclidean metric in each $X_k$, and
	$d(x,y):=|x|+|y|$ for $x\in X_k$ and $y\in X_l$, $k\neq l$.
Equip each $X_k$ with the
weighted Lebesgue measure $d\mu:=w\,d\mathcal L^{n_k}$, where
$w\colon X\setminus \{0\}\to (0,\infty)$ is any continuous function such that $\mu$ becomes locally finite
(finite in a neighborhood of the origin, that is).
Suppose $\Om\subset X$ is open and bounded and
$f\colon \Om\to  Y$ is injective and continuous with $\nu(f(\Om))<\infty$, and
$f(\Om\cap X_k)\subset X_k$
for each $k=1,\ldots,m$.
Let $E$ be the union of the origin and sets of $\sigma$-finite $\mathcal H^{n_k-1}$-measure
in $X_k\setminus \{0\}$, $k=1,\ldots,m$.
Now Condition (3) of Theorem \ref{thm:main theorem intro} is
\begin{equation}\label{eq:condition 3 example}
\sum_{k=1}^{m}\int_{X_k\cap \Om}\left(\frac{w(x)}{w(f(x))}
 h_f^{\vee}(x)^{n_k}\right)^{1/(n_k-1)}w(x)\,d\mathcal L^{n_k}(x)<\infty.
\end{equation}
If this is satisfied, and also $h_f<\infty$ in $\Om\setminus E$, then $f\in D^{1}(\Om;Y)$
by Theorem \ref{thm:main theorem intro}.
For example, take  $m=2$ and $n_1=n_2=2$, to obtain the ``bowtie''.
Typically, one considers $w(x):=|x|^{\alpha}$, $\alpha\in\R$.
Now \eqref{eq:condition 3 example} is simply
\[
\int_{\Om\cap X_1} \frac{|x|^{2\alpha}}{|f(x)|^{\alpha}}h^{\vee}_f(x)^2\,d\mathcal L^2(x)
+\int_{\Om\cap X_2} \frac{|x|^{2\alpha}}{|f(x)|^{\alpha}}h^{\vee}_f(x)^2\,d\mathcal L^2(x)
<\infty.
\]
Note that if $-2<\alpha<-1$, we have $\widehat{\mathcal{H}}(\{0\})=\infty$,
but $\{0\}$ can always be included in $E$ because it is only one point,
recall Proposition \ref{prop:exceptional set E}.
\end{example}

Now consider the following:
we equip Ahlfors $Q$-regular metric spaces $(X_0,d,\mu_0)$ and $(Y_0,d_Y,\nu_0)$ with weights $w$ and $w_Y$.
Note that the weights have Lebesgue points almost everywhere, see e.g. Heinonen \cite[Theorem 1.8]{Hei}.
For our purposes, it is natural to consider the pointwise representatives
\begin{equation}\label{eq:w representative}
w(x)=\limsup_{r\to 0}\frac{1}{\mu_0(B(x,r))}\int_{B(x,r)}w\,d\mu_0,\quad x\in X,
\end{equation}
and
\begin{equation}\label{eq:wY representative}
w_Y(y)=\liminf_{r\to 0}\frac{1}{\nu_0(B(y,r))}\int_{B(y,r)}w_Y\,d\nu_0,\quad y\in Y.
\end{equation}

It is straightforward to check that $w,w_Y$ are then Borel functions.

\begin{proof}[Proof of Corollary \ref{cor:weights}]
In order to apply Theorem \ref{thm:main theorem}, we essentially only need to
take care of the problem that $\mu$ might not be asymptotically doubling at every point $x\in\Om\setminus E$.
Denote this ``bad'' set by $A$.
By the fact that
$\mu_0$ is Ahlfors regular and $w$ has Lebesgue points $\mu_0$-a.e., we have $\mu_0(A)=0$.

From the definition of $\widehat{\mathcal H}$ it is straightforward to check
that $\mu(E)=0=\widetilde{\mu}(E)$, so in particular $E$ is $\widetilde{\mu}$-measurable. 
We have $A=\bigcup_{j=1}^{\infty}A_j$ where $w\ge 1/j$ in $U_j$ for some
open $U_j\supset A_j$.
Consider also the sets $D_j$ where
\begin{equation}\label{eq:Dj}
w(x)=\limsup_{r\to 0}\frac{1}{\mu_0(B(x,r))}\int_{B(x,r)}w\,d\mu_0\le  j\quad\textrm{for all }x\in D_j.
\end{equation}
Note that for all $x\in D_{j}$, necessarily
\begin{equation}\label{eq:density estimate}
\liminf_{r\to 0}\frac{\mu_0(D_{2j}\cap B(x,r))}{\mu_0(B(x,r))}\ge \frac{1}{2}.
\end{equation}
Recursively define $w_0:=w$ and
\[
w_{j+1}:=
\begin{cases}
2j & \textrm{in }U_j\cap D_{2j}\\
w_j & \textrm{in }\Om\setminus (U_j\cap D_{2j}).
\end{cases}
\]
Choosing each $U_j$ small enough, we can ensure that
\[
\int_{\Om}w_{j+1}\,d\mu_0\le \int_{\Om}w_{j}\,d\mu_0+2^{-j}
\]
as well as
\[
\int_{\Om}\left(\frac{[w_{j+1}(\cdot)h_f^{\vee}(\cdot)]^{Q}}{w_Y(f(\cdot))}\right)^{1/(Q-1)}\,d\mu_0
\le \int_{\Om}\left(\frac{[w_{j}(\cdot)h_f^{\vee}(\cdot)]^{Q}}{w_Y(f(\cdot))}\right)^{1/(Q-1)}\,d\mu_0
+2^{-j}.
\]
Noting that $w_j$ is an increasing sequence, at the limit we can define a weight function
$\widetilde{w}:=\lim_{j\to\infty}w_j$ a.e., and then we can take the pointwise representative
\[
\widetilde{w}(x)=\limsup_{r\to 0}\frac{1}{\mu_0(B(x,r))}\int_{B(x,r)}\widetilde{w}\,d\mu_0,\quad x\in X.
\]
By monotone convergence, we still have $\widetilde{w}\in L^1(\Om,\mu_0)$ and
\[
\left(\frac{[\widetilde{w}(\cdot)h_f^{\vee}(\cdot)]^{Q}}{w_Y(f(\cdot))}\right)^{1/(Q-1)}\in L^1(\Om,\mu_0)
\quad\textrm{and so}\quad
\left(\frac{\widetilde{w}(\cdot)h_f^{\vee}(\cdot)^{Q}}{w_Y(f(\cdot))}\right)^{1/(Q-1)}\in L^1(\Om,\widetilde{\mu}).
\]
Let $d\widetilde{\mu}:=\widetilde{w}\,d\mu_0$; then $\mu\le \widetilde{\mu}\ll\mu$.
Now for every $x\in U_j\cap D_j$, by \eqref{eq:density estimate} we have
\[
\liminf_{r\to 0}\frac{\widetilde{\mu}(B(x,r))}{\mu_0(B(x,r))}
\ge \liminf_{r\to 0}\frac{2j\mu_0(D_{2j}\cap B(x,r))}{\mu_0(B(x,r))}
\ge j.
\]
Now for every $x\in A_j\cap D_j$ we have 
\[
\limsup_{r\to 0}\frac{\mu(B(x,20r))}{\widetilde{\mu}(B(x,r))}
\le \limsup_{r\to 0}\frac{1}{j\mu_0(B(x,r))}\int_{B(x,20r)}w\,d\mu_0
\le 20^Q  C_A^2 
\]
by \eqref{eq:Dj}, where $C_A\ge 1$ is the Ahlfors regularity constant of $X_0$ and $Y_0$.
Since $A=\bigcup_{j=1}^{\infty}A_j\cap D_j$, we have the above for all $x\in A$ and then in fact for
all $x\in \Om\setminus E$.
By making $w_Y$ larger if necessary, we can assume that $w_Y>0$ in $f(\Om)$,
such that still $\nu(f(\Om))<\infty$.
Now we can choose $R(x)$ in Theorem \ref{thm:main theorem}
to be $2C_A^2\widetilde{w}(x)/w_Y(f(x))$, for all $x\in\Om$.
Thus Theorem \ref{thm:main theorem}
combined with Proposition \ref{prop:exceptional set E} gives the conclusion.
\end{proof}

As we have already seen, our results enable equipping the space $X$ with various weights.
However, Corollary \ref{cor:weights} is especially flexible when applied to weights in the
space $Y$. In this case, we can obtain results that seem to be new already
in unweighted Euclidean spaces, as in the following example.

\begin{example}\label{ex:plane}
	Consider the  square $\Om:=(-1,1)\times (-1,1)$ on the unweighted plane $X=Y=\R^2$.
	Let $1<b<\infty$ and consider the homeomorphism $f\colon \Om\to \Om$
\[
f(x_1,x_2):=
\begin{cases}
(x_1,x_2^{b}), & x_2\ge 0,\\
(x_1,-|x_2|^{b}), & x_2\le 0.
\end{cases}
\]
Essentially, $f$ maps squares centered at the origin to rectangles that become
thinner and thinner near the origin.

By symmetry, it will be enough to study the behavior of $f$ in the unit square $S:=(0,1)\times (0,1)$.
There we have
\[
Df(x_1,x_2)=
\left[
\begin{matrix}
1 & 0 \\
0 & b x_2^{b-1}
\end{matrix}
\right]
\]
and so
\[
|Df|=\sqrt{(1+(b x_2^{b-1})^2}\le 1+b x_2^{b-1},
\]
so that $Df\in L^1(S)$ and then in fact $Df\in L^1(\Om)$ and $f\in N^{1,1}(\Om;\Om)$.

Clearly $f$ maps a small square centered at $(x_1,x_2)\in S$ with side length $\eps$
to a rectangle centered at $f(x_1,x_2)$ and with side lengths
$\eps$ and $b x_2^{b-1}\eps+o(\eps)$.
This means that
\[
h_f(x_1,x_2)= b^{-1}x_2^{1-b}\quad\textrm{in }S_1:=
\{(x_1,x_2) \in S\colon x_2\le b^{1/(1-b)}\}
\]
and
\[
h_f(x_1,x_2)= bx_2^{b-1}\quad\textrm{in }S_2:=
\{(x_1,x_2) \in S\colon x_2\ge b^{1/(1-b)}\}.
\]
Of these two sets, $S_1$ is the relevant one for us, since it contains a neighborhood of
the $x_1$-axis in $S$.

Obviously the plane is an  Ahlfors $Q$-regular space with $Q=2$, and so
\[
h_f^{Q/(Q-1)}(x_1,x_2)=h_f^2(x_1,x_2)= b^{-2}x_2^{2-2b}
\]
for $(x_1,x_2)\in S_1$, which is not in
$ L^1(S_1)$ when  $b\ge 3/2$.
Thus $h_f$ is not in $L^{Q/(Q-1)}(\Om)$, and so
Williams \cite[Corollary 1.3]{Wi} does not give $f\in N^{1,1}(\Om;\Om)$.

In this classical setting a sharper result exists: in Koskela--Rogovin
\cite[Corollary 1.3]{KoRo} the condition is
$H_f\in L^{1}(\Om)$; the definition of $H_f$ was given in \eqref{eq:Hf}
but note that in this example we have $h_f=H_f$.
However, now $h_f\notin L^1(S_1)$ when $b\ge 2$.

On the other hand, we can equip $Y$ with the weight $w_Y(y_1,y_2)=|y_2|^{u}$, $-1<u<0$.
Now for $(x_1,x_2)\in \Om$,
\[
w_Y(f(x))=||x_2|^{b}|^{u}=|x_2|^{bu},
\]
and so in $S_1$,
\begin{equation}\label{eq:weight quantity}
\left(\frac{h_f(x_1,x_2)^{Q}}{w_Y(f(x_1,x_2))}\right)^{1/(Q-1)}
=b^{-2}x_2^{2-2b}x_2^{-bu},
\end{equation}
which is in $L^1(S_1)$ if $b<3/(2+u)$.
Varying $-1<u<0$, we conclude that $b<3$ is required.
Since $S_1$ contains a neighborhood of the $x_1$-axis in $S$, clearly
the left-hand side of \eqref{eq:weight quantity} is then in $L^1(S)$, and then by symmetry in $L^1(\Om)$.
Moreover, $h_f<\infty$ in $\Om\setminus E$ with $E$ consisting of the $x_1$-axis intersected with $\Om$, so that
$E$ obviously has finite $\widehat{\mathcal H}$-measure ($\widehat{\mathcal H}$ being comparable to the
$1$-dimensional Hausdorff measure).
Thus Corollary \ref{cor:weights} implies that $f\in N^{1,1}(\Om;\Om)$.

Hence, we were able to detect that $f\in N^{1,1}(\Om;\Om)$
also in the case $b\in [2,3)$, which was not indicated by
any of the previous results, to the best of our knowledge.
This improvement seems quite unexpected since the Euclidean theory appeared rather complete:
Kallunki--Martio \cite{KaMa} give an example of a mapping $f\colon \R^2\to \R^2$ for which
$H_f\in L_{\loc}^s(\R^2)$ for all $0<s<1$, but $f\notin N_{\loc}^{1,1}(\R^2;\R^2)$.
Thus the condition $H_f\in L_{\loc}^1(\R^2)$ is known to be sharp in the range of integrability exponents.
\end{example}

\begin{remark}
In quasiconformal theory, one is often interested in whether $f\in N^{1,Q}(\Om;Y)$.
Equipping $Y$ with weights is likely to prove useful for this question also,
but establishing $N^{1,Q}$-regularity requires different methods from ours
and probably more assumptions on the space $X$, so in the current paper we only consider the case $p=1$.
\end{remark}

In many previous works including Balogh--Koskela--Rogovin \cite{BKR},
the stronger assumption $h_f\in L^{\infty}(X)$ is made, instead of only $h_f\in L_{\loc}^{Q/(Q-1)}(X)$.
In such a setting, we can obtain for example
the following result.

\begin{corollary}\label{cor:Linfinity}
	Let $(X_0,d,\mu_0)$ and $(Y,d_Y,\nu)$ be Ahlfors $Q$-regular spaces, with $Q>1$.
	Let $X$ be the same metric space as $X_0$, but equipped with the weighted measure
	$d\mu:=w\,d\mu_0$, with the weight $w$ represented by \eqref{eq:w representative}.
	Let $\Om\subset X$ be open and bounded and
	let $f\colon \Om\to Y$ be injective and continuous with $\nu(f(\Om))<\infty$.
	Suppose $E\subset \Om$ is the disjoint union of a countable set $E_1$ and a set $E_2$ with
	$\sigma$-finite $\mathcal H^{Q-1}$-measure, and that
	$w<\infty$ in $\Om\setminus E_1$
	and $h_f<\infty$ in $\Om\setminus E$, and $w\in L^{1/(Q-1)}(\Om,\mu_0)$
	and $h_f\in L^{\infty}(\Om)$.
	Then $f\in D^1(\Om;Y)$.
\end{corollary}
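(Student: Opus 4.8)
The plan is to deduce the statement from Corollary~\ref{cor:weights} by equipping the target with a suitable weight. The key point, already noted in the text, is that membership in $D^1(\Om;Y)$ -- which is exactly what we want to conclude -- depends only on the metric of $Y$ and not on its measure; hence we are free to replace the Ahlfors-regular measure $\nu$ by any weighted measure $w_Y\,\nu$ with $w_Y$ represented by \eqref{eq:wY representative} and $(w_Y\nu)(f(\Om))<\infty$, and then invoke Corollary~\ref{cor:weights} with source data $X_0,\mu_0,w$ as given and with target base space $(Y_0,\nu_0):=(Y,\nu)$ carrying the weight $w_Y$.

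Set $C:=\max\{\Vert h_f\Vert_{L^\infty(\Om)},1\}$, so $h_f^\vee\le C$ on $\Om$. The weight I would use is (the representative \eqref{eq:wY representative} of) the function equal to $\max\{w(f^{-1}(y)),1\}^{Q-1}$ on $f(\Om\setminus E_1)$ and to $1$ on the rest of $Y$; since $w<\infty$ off the countable set $E_1$ and $f$ is injective this is well defined off a $\nu$-null set, and it is a Borel function using that $f$ carries Borel sets to analytic sets, as in the proof of Theorem~\ref{thm:main theorem intro}. With this choice $w_Y(f(x))^{1/(Q-1)}\gtrsim\max\{w(x),1\}$, so the integrand in the $L^1$-condition of Corollary~\ref{cor:weights} satisfies
\[
\Bigl(\tfrac{[w(x)\,h_f^\vee(x)]^{Q}}{w_Y(f(x))}\Bigr)^{1/(Q-1)}\ \lesssim\ C^{Q/(Q-1)}\bigl(w(x)^{1/(Q-1)}+1\bigr),
\]
which belongs to $L^1(\Om,\mu_0)$ by the hypotheses $w\in L^{1/(Q-1)}(\Om,\mu_0)$ and $\mu_0(\Om)<\infty$ (as $\Om$ is a bounded subset of an Ahlfors-regular, hence doubling, space). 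This is precisely where $h_f\in L^\infty$ is spent: it lets us trade the stronger weight-integrability that Corollary~\ref{cor:weights} would otherwise require for a weight on $Y$. The remaining hypotheses of Corollary~\ref{cor:weights} are then bookkeeping: $w$ is a weight, hence positive and locally bounded away from $0$ off $E$ (the set $\{w=0\}$ being $\mu$-null and harmlessly adjoined to the exceptional set), $h_f<\infty$ off $E$ is assumed, and $E=E_1\cup E_2$ is the union of a countable set and a set which is $\sigma$-finite for $\widehat{\mathcal H}$ because $\widehat{\mathcal H}$ is comparable to $\mathcal H^{Q-1}$ in the Ahlfors $Q$-regular space, so Proposition~\ref{prop:exceptional set E} applies. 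Corollary~\ref{cor:weights} then yields $f\in D^1(\Om;Y)$, and since this class is unchanged by the re-weighting of $Y$, we are done.

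The step I expect to be the real obstacle is ensuring $(w_Y\nu)(f(\Om))<\infty$ while keeping $w_Y$ large enough for the displayed estimate to hold; the naive pointwise prescription $w_Y(f(x))=\max\{w(x),1\}^{Q-1}$ need not be $\nu$-integrable over $f(\Om)$. To get around this one exploits that the condition in Corollary~\ref{cor:weights} is an $L^1$-condition rather than a pointwise one, so $w_Y$ may be truncated from above by a function integrable over $f(\Om)$ -- the slack for this coming once more from $h_f\in L^\infty$. Making this truncation precise, and checking that the truncated $w_Y$ still drives the integrand above into $L^1(\Om,\mu_0)$ and is represented by \eqref{eq:wY representative}, is the technical heart of the argument; everything else is a direct reduction to Corollary~\ref{cor:weights} and Proposition~\ref{prop:exceptional set E}.
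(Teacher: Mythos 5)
Your overall strategy---reduce to Corollary \ref{cor:weights} by putting a weight on the target---is the same as the paper's, but there is a genuine gap at the step you dismiss as ``bookkeeping.'' You justify the $\sigma$-finiteness of $E_2$ with respect to $\widehat{\mathcal H}$ by saying that $\widehat{\mathcal H}$ is comparable to $\mathcal H^{Q-1}$ in an Ahlfors $Q$-regular space. But the $\widehat{\mathcal H}$ appearing in the hypotheses of Corollary \ref{cor:weights} is computed from the measure that $X$ actually carries, namely $d\mu=w\,d\mu_0$ (see Definition \ref{def:centered codimension one measure}), and this measure is in general not Ahlfors regular; the comparability with $\mathcal H^{Q-1}$ fails precisely where $w$ is large near $E_2$. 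This is not a side issue: it is the entire content of the paper's proof. The paper first replaces $w$ by $w+1$ (a cleaner fix than adjoining $\{w=0\}$ to $E$, which would both destroy the required structure of $E$ and still not give ``locally bounded away from zero''), and then uses the representative formula \eqref{eq:w representative} to write $E_2=\bigcup_j A_j$, where on $A_j$ the averages of $w$ over small balls are at most $j$; a covering argument then converts a finite-$\mathcal H^{Q-1}$ cover of $A_j$ into a cover witnessing $\widehat{\mathcal H}(A_j)<\infty$. Your proposal never uses the hypothesis that $w$ is given by \eqref{eq:w representative}, which is exactly the hypothesis that makes this step work.

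The second problem is the one you yourself flag as ``the technical heart'' but do not resolve, and the fix you sketch does not work. To get the integrand of Corollary \ref{cor:weights} down to $w^{1/(Q-1)}$ you need $w_Y(f(x))\gtrsim w(x)^{Q-1}$ pointwise, while the hypothesis $\nu(f(\Om))<\infty$ of Corollary \ref{cor:weights} (with $\nu=w_Y\nu_0$) requires $\int_{f(\Om)}w_Y\,d\nu_0<\infty$, and for a general injective continuous $f$ there is no change-of-variables relating $\mu_0$ and $\nu_0$, so $\int_{f(\Om)}(w\circ f^{-1})^{Q-1}\,d\nu_0$ can be infinite. Truncating $w_Y$ from above at a level $\Lambda$ restores finiteness of the target measure, but on the set $\{w^{Q-1}>\Lambda\}$ the integrand then reverts to a constant multiple of $w^{Q/(Q-1)}$, and $w\in L^{1/(Q-1)}(\Om,\mu_0)$ gives no control of $w^{Q/(Q-1)}$ where $w$ is unbounded; the slack from $h_f\in L^\infty$ has already been spent and cannot absorb this. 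So the truncation scheme, as described, cannot close the argument, and your proof remains incomplete at both of its load-bearing steps.
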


Theorem 1.1 of \cite{BKR} is essentially the above with $w\equiv 1$.
We instead show that very general weights $w$ are allowed: the restrictions are that
$w$ can only have countable many singularities (where $w=\infty$)
and it needs to be $1/(Q-1)$-integrable.

\begin{proof}
By replacing $w$ with $w+1$, we can assume that $w$ is bounded away from zero;
note that if $f\in D^1(\Om;Y)$ with respect to a larger weight, then also
$f\in D^1(\Om;Y)$ with respect to the original weight.
We need to show that $E_2$ has $\sigma$-finite $\widehat{\mathcal H}$-measure.
It is enough to do this for a subset of $E_2$ with finite $\mathcal H^{Q-1}$-measure
and so we can assume that $\mathcal H^{Q-1}(E_2)<\infty$.
Let
\[
A_{j}:=\Big\{x\in E_2\colon \sup_{0<r\le 1/j} \frac{1}{\mu_0(B(x,r))}\int_{B(x,r)}w\,d\mu_0< j\Big\},
\]
and note that $E_2=\bigcup_{j=1}^{\infty}A_{j}$.
It is then enough to show that a fixed $A_{j}$ has finite $\widehat{\mathcal H}$-measure.
Let $0<\delta<1/(2j)$. We find a covering $\{B_l=B(x_l,r_l)\}_{l=1}^{\infty}$ of $A_{j}$
with $r_l\le \delta/2$ and
\[
\sum_{l=1}^{\infty}r_l^{Q-1}
\le \mathcal H^{Q-1}(A_{j})+1
<\infty.
\]
For every $B_l$, we can assume there is a point $y_l\in B_l\cap A_{j}$. Now
the balls $B(y_l,2r_l)$ cover $A_{j}$ and so by Definition \ref{def:centered codimension one measure} of
$\widehat{\mathcal H}$, we get
\begin{align*}
\widehat{\mathcal H}_{\delta}(A_{j})
\le \sum_{l=1}^{\infty}\frac{\mu(B(y_l,4r_l))}{2r_l}
\le j\sum_{l=1}^{\infty}\frac{\mu_0(B(y_l,4r_l))}{2r_l}
&\le jC_A 4^{Q}\sum_{l=1}^{\infty}r_l^{Q-1}\\
&\le jC_A 4^{Q}(\mathcal H^{Q-1}(A_{j})+1),
\end{align*}
where $C_A\ge 1$ is the Ahlfors regularity constant.
Letting $\delta\to 0$, we get $\widehat{\mathcal H}(A_{j})<\infty$ and so by the above,
we conclude that $E_2$ has $\sigma$-finite $\widehat{\mathcal H}$-measure.
Now Corollary \ref{cor:weights} implies the result.
\end{proof}

\end{document}